\def\lineWidth{0.4mm}
\tikzset{
	whiteDot/.style={minimum size=20pt, draw,shape=circle,fill=white, text=black, inner sep=0.0pt}
}
\tikzset{
	blackDot/.style={minimum size=20pt, draw,shape=circle,fill=black, text=white, inner sep=0.0pt}
}
\tikzset{middlearrow/.style={
        decoration={markings,
            mark= at position 0.5 with {\arrow[scale=2]{#1}} ,
        },
        postaction={decorate}
    }
}
\def\BState{\State\hskip-\ALG@thistlm}
\newcommand{\vol}{{\rm vol}}
\newcommand{\sgn}{{\rm sgn}}
\newcommand{\rank}{{\rm rank}}
\newtheorem{theorem}{Theorem}[section]
\newtheorem{corollary}[theorem]{Corollary}
\newtheorem{lemma}[theorem]{Lemma}
\newtheorem{question}[theorem]{Question}
\theoremstyle{definition}
\newtheorem{example}[theorem]{Example}
\newtheorem{remark}[theorem]{Remark}
\numberwithin{equation}{section}
\numberwithin{table}{section}
\numberwithin{figure}{section}
\def\Dj{D\kern-0.8em\char"16\kern-0.1em}
\begin{document}

\title[Signed graphs and inverses of their incidence matrices]{Signed graphs and inverses of their incidence matrices}

\author{Abdullah Alazemi}
\address{Department of Mathematics, Kuwait University, Safat 13060,
Kuwait} \email{abdullah.alazemi@ku.edu.kw}

\author{Milica An\dj{}eli\'c}
\address{Department of Mathematics, Kuwait University, Safat 13060,
Kuwait, and Department of Natural and Mathematical Sciences, State University of Novi Pazar, Serbia}
\email{milica.andelic@ku.edu.kw}

\author{Sudipta Mallik}
\address{Department of Mathematics and Statistics, Northern Arizona University, 801 S. Osborne Dr., PO Box: 5717, Flagstaff, AZ 86011, USA} \email{sudipta.mallik@nau.edu}

\subjclass[2000]{05C50, 15A09}

\date{\today}

\keywords{Moore-Penrose inverse; signed graphs; incidence matrix; Laplacian matrix}

\begin{abstract}
The Laplacian matrix $L$ of a signed graph $G$ may or may not be invertible. We present a combinatorial formula of the Moore-Penrose inverse of $L$. This is achieved by finding a combinatorial formula for the Moore-Penrose inverse of an incidence matrix of $G$. This work generalizes related known results on incidence and Laplacian matrices of an unsigned graph. Several examples are provided to show the usefulness of these combinatorial formulas.
\end{abstract}

\maketitle \medskip\noindent

\section{Introduction}

A signed graph is a graph in which each of its edges is positive or negative. More formally, a  {\it signed graph} $\Gamma$ is a pair $\Gamma=(G,\sigma)$ where $G=(V,E)$ is a graph, called the {\it underlying graph}, with vertex and edge sets $V=\{1,2,\ldots, n\}$ and $E=\{e_1,e_2, \ldots, e_m\}$ respectively and $\sigma: E\rightarrow \{-1,1\}$ is the {\it sign function} or {\it signature}. An edge $e_{\ell}$ is {\it positive} (resp. {\it negative}) if $\sigma(e_{\ell})=1$ (resp. $\sigma(e_{\ell})=-1$). By $E^-(\Gamma)$ (resp. $E^+(\Gamma)$) we denote the set of negative (resp. positive) edges in $\Gamma$.  Throughout this article, we consider only simple signed graphs, i.e., signed graphs that do not have loops and multiple edges.

A cycle $C$ in $\Gamma$ is {\it balanced} if it contains an even number of negative edges. $\Gamma$ is said to be {\it balanced} if all its cycles are balanced, otherwise it is {\it unbalanced}. The adjacency matrix of $\Gamma=(G,\sigma)$ is $A(\Gamma)=[a^{\sigma}_{ij}]  $ with $a^{\sigma}_{ij}=\sigma(ij)a_{ij}$, where $A=[a_{ij}]$ is the adjacency matrix of the underlying graph $G$. The corresponding Laplacian matrix $L(\Gamma)=D(G)-A(\Gamma)$, where $D(G)$ is the diagonal matrix of vertex degrees. Before we define incidence matrix of $\Gamma$ in details, note that $L(\Gamma)=NN^\intercal$ for any incidence matrix $N$ of $\Gamma$.

An incidence matrix $N=[n_{ij}]$ of $\Gamma$ is an $n\times m$ binary matrix  with rows indexed by vertices and columns indexed by edges such that each of its columns has exactly two nonzero entries as follows: 
\begin{enumerate}
    \item[(a)] when $e_{\ell}=\{i,j\}$ is a negative edge, $n_{i\ell}=n_{j\ell}$ and it is either $1$ or $-1$, and 
    \item[(b)] when $e_{\ell}=\{i,j\}$ is a positive edge, $n_{i\ell}=-n_{j\ell}$ and it is either $1$ or $-1$.
\end{enumerate}
Multiple incidence matrices can be associated with $\Gamma$. In particular, if $\Gamma=(G,\sigma)$ has all negative edges, then the incidence matrix of the underlying graph $G$ is an incidence matrix of $\Gamma$. Similarly, if $\Gamma=(G,\sigma)$ has all positive edges, then an oriented incidence matrix of the underlying graph $G$ is an incidence matrix of $\Gamma$. It is important to mention that an oriented incidence matrix $N=[n_{ij}]$ of $G$ gives each of its edges an orientation as follows \cite{BH,bap}: when $n_{i\ell}=-n_{j\ell}=1$, edge $e_{\ell}=\{i,j\}$ is oriented as $(i,j)$ which is directed from vertex $i$ to vertex $j$.

We get a fixed incidence matrix of $\Gamma=(G,\sigma)$ if each of its edges has a bi-direction in which each edge has two arrowheads at the ends. We define $n_{i\ell}=\eta(i,e_\ell)$ where $\eta: V\times E\to \{+1,-1, 0\}$ is defined as follows:
\begin{itemize}
\item[(i)] $\eta(i,e_\ell)=0$ if $i\notin e_\ell$,
\item[(ii)] $\eta(i,e_\ell)=-1$ (resp. $+1$) if an arrow at $i$ goes into (resp. out of) $i$ and $e_\ell=\{i,j\}$,
\item[(iii)] $\eta(i,e_\ell)\eta(j, e_\ell)=-\sigma(e_\ell)$ for $e_\ell=\{i,j\}$.
\end{itemize}
Note that the last two conditions imply that two arrows of a positive (resp. negative) edge have the same  direction (resp. opposite directions). In the literature of signed graphs the signs in  condition (ii) are reversed. We changed it to be consistent with the literature of oriented incidence matrix of an unsigned graph \cite{BH,bap}. This work is a generalization of work on graphs to signed graphs and we did not want to contradict the existing results on the Moore-Penrose inverse of an oriented matrix of a graph \cite{bap}.

For  $A\in \mathbb R^{n\times m}$,  a  matrix $X\in \mathbb R^{m\times n}$ is called the {\it Moore-Penrose inverse} of $A$ if 
\begin{equation}\label{1}
AXA=A,\,\,
XAX=X, \,\,(AX)^{\intercal}=AX,\,\, (XA)^{\intercal}=XA,
 \end{equation} and it is denoted by $A^{\dagger}.$

In 1965,  Ijiri first studied the Moore-Penrose inverse of the oriented incidence matrix of a graph in \cite{I}. Bapat did the same for the Laplacian and oriented incidence  matrices of trees \cite{bap}. Further research studied the same topic for different graphs such as distance regular graphs \cite{AB,ABE}. With the emergence of research on the signless Laplacian of graphs \cite{CRC, HM},  Hessert and Mallik studied the Moore-Penrose inverses of the incidence matrix and  signless Laplacian of a tree and an unicyclic graph in \cite{HM1, HM2}. Ipsen and Mallik studied wheel graphs in the same context \cite{Ipsen}. Alazemi, Alhalabi, and An\dj eli\'c did the same for a larger graph class namely bipartite graphs \cite{AOM}. Since a signed graph is a generalization of an unsigned graph, it is a natural open problem to extend the investigation regarding Moore-Penrose inverse to signed graphs. In this article we find combinatorial formula for the entries of the Moore-Penrose inverse of the Laplacian and incidence matrices of a signed graph. 


The content of the present paper is organized as follows. 
In Section \ref{sec:2} we deliver several preliminary results. Section \ref{sec:3} covers combinatorial formulas for the Moore-Penrose inverse of incidence matrices of signed trees and unbalanced unicyclic signed graphs. Section \ref{sec:4} extends the results of Section \ref{sec:3} to incidence matrices of  connected signed graphs.  Results related to Laplacian matrix are subject of Section \ref{sec:5}. Several open problems are reported in Section \ref{sec:6}.

\section{Preliminaries}\label{sec:2}

\noindent We recall several  results from \cite{bap,BSKS}, essential for some initial considerations regarding the rank and minors of incidence matrices of signed graphs. 

\begin{lemma}{\rm (\cite[Lemma 3.4]{BSKS})}\label{rank}
Let $N$ be the incidence matrix of a connected signed graph $\Gamma$ of order $n$. Then 
$$\rank(N)=\begin{cases}
n-1& \text{ if \,$\Gamma$ is balanced,}\\
n& \text{ if \,$\Gamma$ is unbalanced.}
\end{cases}$$
\end{lemma}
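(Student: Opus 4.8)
The plan is to compute $\rank(N) = n - \dim(\ker N^\intercal)$, so it suffices to determine the dimension of the left null space of $N$. A vector $x=(x_1,\dots,x_n)^\intercal$ lies in $\ker N^\intercal$ precisely when $x^\intercal N = 0$, i.e. for every edge $e_\ell = \{i,j\}$ we have $\eta(i,e_\ell)x_i + \eta(j,e_\ell)x_j = 0$. Using condition (iii), $\eta(j,e_\ell) = -\sigma(e_\ell)\eta(i,e_\ell)$, this becomes $\eta(i,e_\ell)\bigl(x_i - \sigma(e_\ell)x_j\bigr)=0$, equivalently $x_i = \sigma(e_\ell)\,x_j$. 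Thus a left null vector is exactly an assignment of real numbers to the vertices that is "consistent" with every edge's sign.

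First I would treat the connectedness: fix a spanning tree $T$ of the underlying graph $G$ and a root vertex $r$. The tree relations $x_i = \sigma(e)x_j$ along the edges of $T$ determine every $x_i$ uniquely from $x_r$, namely $x_i = \Bigl(\prod_{e \in P_i} \sigma(e)\Bigr)x_r$, where $P_i$ is the unique $r$--$i$ path in $T$; call this scalar $\epsilon_i \in \{+1,-1\}$. Hence $\dim\ker N^\intercal \le 1$ always, giving $\rank(N)\ge n-1$. It remains to decide when the candidate vector $x = (\epsilon_1,\dots,\epsilon_n)^\intercal$ actually satisfies the relation on the non-tree edges as well.

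The key step is the following claim: the vector $x$ above is a genuine left null vector if and only if $\Gamma$ is balanced. For a non-tree edge $e=\{i,j\}$, the relation $\epsilon_i = \sigma(e)\epsilon_j$ holds iff $\sigma(e)\epsilon_i\epsilon_j = 1$, i.e. iff $\sigma(e)\prod_{f\in P_i}\sigma(f)\prod_{f\in P_j}\sigma(f) = 1$. The edge set $P_i \triangle P_j \cup \{e\}$ (symmetric difference of the two tree paths, together with $e$) is exactly the fundamental cycle $C_e$ of $e$ with respect to $T$, and the product of signs over that edge set equals $\sigma(e)\prod_{f\in P_i}\sigma(f)\prod_{f\in P_j}\sigma(f)$ since the common part of $P_i$ and $P_j$ is traversed twice and contributes $+1$. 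Therefore $x$ is a left null vector iff every fundamental cycle $C_e$ has sign product $+1$, i.e. contains an even number of negative edges; since the fundamental cycles generate the cycle space and the sign-product is a homomorphism from the cycle space to $\{\pm1\}$, this is equivalent to all cycles being balanced, i.e. to $\Gamma$ being balanced. Consequently $\dim\ker N^\intercal = 1$ when $\Gamma$ is balanced and $\dim\ker N^\intercal = 0$ when $\Gamma$ is unbalanced, which yields the stated rank formula.

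I expect the main obstacle to be the bookkeeping in the fundamental-cycle computation: one must be careful that the sign contributions of the shared portion of the two root-paths cancel correctly and that the symmetric difference really is the fundamental cycle (this uses that $T$ is a tree, so $P_i\triangle P_j$ is precisely the $i$--$j$ path in $T$). Once that identity is in place, the equivalence with balancedness is immediate from the definition. Alternatively, one could invoke the switching equivalence of balanced signed graphs to the all-negative (or all-positive) signature and reduce to the classical unsigned incidence-matrix rank statement, but the direct null-space argument above is self-contained and is what I would write.
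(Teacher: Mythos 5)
Your proof is correct. Note that the paper itself gives no proof of this lemma --- it is quoted verbatim from \cite[Lemma~3.4]{BSKS} --- so there is nothing in-paper to compare against; your left-null-space argument is self-contained, and it is in fact the same characterization of $\ker N^\intercal$ (namely $x_i=\sigma(e)x_j$ across each edge $e=\{i,j\}$, propagated along paths) that the authors themselves use later in the proof of Theorem~\ref{ps}. The one point worth making explicit when you write it up is the standard fact that the edge-sign product is a homomorphism from the cycle space (under symmetric difference) to $\{\pm 1\}$, so that balancedness of all fundamental cycles of a spanning tree implies balancedness of all cycles; with that in place, your computation that the candidate vector $(\epsilon_1,\dots,\epsilon_n)^\intercal$ extends to the non-tree edges exactly when $\Gamma$ is balanced is airtight, and the rank formula follows.
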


For a matrix $N$, $N(i;\,)$ denotes the submatrix of $N$ with deleted row $i$.

\begin{lemma}{\rm (\cite[Lemma 3.7]{BSKS})}\label{det}
Let $N$ be the incidence matrix of a signed graph $\Gamma$. If $\Gamma$ is a tree, then $\det (N(i;\,))=\pm 1$, and if $\Gamma$ is an unbalanced unicyclic graph, then $\det (N(i;\,))=\pm 2.$
\end{lemma}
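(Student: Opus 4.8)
The plan is to treat the two cases separately: first the tree case, by induction on the number of vertices, and then the unbalanced unicyclic case, by reducing the determinant to a pair of tree minors and invoking Lemma~\ref{rank}. Note that for a tree $N$ is $n\times(n-1)$, so $N(i;\,)$ is square of order $n-1$; for an unbalanced unicyclic $\Gamma$ there are exactly $n$ edges, so the relevant square matrix is $N$ itself, and I will show $\det N=\pm2$.

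For the tree case I would induct on $n$. The cases $n=1$ (empty matrix, determinant $1$) and $n=2$ (a single $\pm1$ entry) are immediate. For $n\ge3$ a tree has at least two pendant vertices, so I can choose a pendant vertex $u\ne i$ with pendant edge $e=\{u,w\}$. Since $u$ is incident only to $e$ and $u\ne i$, the row of $N(i;\,)$ indexed by $u$ has exactly one nonzero entry, $n_{ue}=\pm1$, in the column indexed by $e$. Expanding the determinant along that row leaves a single minor, namely the determinant of $N(i;\,)$ with row $u$ and column $e$ removed; but deleting row $u$ and column $e$ from $N$ yields an incidence matrix $N'$ of the tree $\Gamma-u$ (which has $n-1$ vertices and still contains $i$), so this minor equals $\det\big(N'(i;\,)\big)=\pm1$ by the induction hypothesis. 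Hence $\det N(i;\,)=\pm1$.

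For the unbalanced unicyclic case, let $C$ be the unique cycle and pick an edge $e=\{p,q\}$ on $C$. Then $T:=\Gamma-e$ is a spanning tree, and up to the sign introduced by a column permutation we may write $N=[\,N_T\mid c\,]$, where $N_T$ is an incidence matrix of $T$ and $c$, the column of $e$, has its only nonzero entries $n_{pe},n_{qe}\in\{\pm1\}$ in rows $p$ and $q$. Cofactor expansion along this column gives
\[
\det N=\pm\,n_{pe}\,\det\big(N_T(p;\,)\big)\ \pm\ n_{qe}\,\det\big(N_T(q;\,)\big),
\]
and by the tree case just proved both minors equal $\pm1$, so $\det N$ is a sum of two terms each equal to $\pm1$; thus $\det N\in\{0,\pm2\}$. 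Since $\Gamma$ is unbalanced, Lemma~\ref{rank} gives $\rank N=n$, hence $\det N\ne0$, and therefore $\det N=\pm2$.

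I do not expect a serious obstacle here: the argument is elementary, and the only care needed is the bookkeeping --- checking that deleting a pendant vertex (resp.\ a cycle edge) leaves a structure of the expected type, that the row (resp.\ column) expanded along has exactly one (resp.\ exactly two) surviving nonzero entries, and that the cofactor signs, which are immaterial in view of the $\pm$ in the statement, are tracked correctly. The single external ingredient is Lemma~\ref{rank}, needed precisely to exclude the cancellation $\pm1\mp1=0$ of the two minors in the unicyclic case.
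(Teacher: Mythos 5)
The paper does not prove this lemma at all --- it is imported verbatim from \cite[Lemma 3.7]{BSKS} --- so there is no in-paper argument to compare against; your proposal supplies a self-contained proof, and it is correct. The tree induction (peel a pendant vertex $u\neq i$, expand along its row, observe that deleting row $u$ and the pendant-edge column leaves an incidence matrix of the smaller signed tree) is sound, and the unicyclic step (expand along the column of a cycle edge to get $\pm1\pm1\in\{0,\pm2\}$, then exclude $0$ via Lemma~\ref{rank}) is exactly the right way to use unbalancedness without re-deriving it; this matches how the paper later uses the lemma in Theorem~\ref{vol}(b), where the determinant of the full square block of an unbalanced unicyclic component is taken to be $\pm2$. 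One small point worth flagging: as literally stated the lemma writes $\det(N(i;\,))=\pm2$ in the unicyclic case, but there $N$ is $n\times n$ and $N(i;\,)$ is not square; your reinterpretation that the intended claim is $\det N=\pm2$ is the only sensible reading and is the one the paper itself adopts, but you should say explicitly that you are correcting the statement rather than silently proving a different one. If you wanted to avoid the appeal to Lemma~\ref{rank}, you could instead track the two cofactor signs directly and show that the two terms cancel precisely when the product of edge signs around the cycle is $+1$, which would simultaneously reprove the balanced half of Lemma~\ref{rank}; but as written your shorter route is legitimate.
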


For an $n\times m$ real matrix $A$ and a positive integer $r\leq n,m$,   $\vol\, A$ denotes the square root of the sum of squares of all $r\times r$ minors of $A$ and 
$A(i_1,i_2,\ldots,i_r),\; 1\leq i_1<i_2<\cdots <i_r\leq m$ denotes the matrix obtained from $A$ by replacing all the columns of $A$ that are different from columns $i_1,i_2,\ldots,i_r$ by zero vectors. Then the Moore-Penrose inverse of $A$ can be computed in the following way.

\begin{theorem}{\rm (\cite{bap, AOM})}\label{MPDet}
Let $A$ be an $n\times m$ real matrix of rank $r>0$. Then
\begin{equation}\label{MPdet}
A^\dagger=\frac{1}{\vol^2 A}\sum_{(i_1,i_2,\ldots,i_r)\in \mathcal M} \vol^2 A(i_1,i_2,\ldots,i_r) A(i_1,i_2,\ldots,i_r)^\dagger, 
\end{equation}
where 
\[\mathcal M=\{(i_1,i_2,\ldots,i_r)\in \mathbb Z^r \;|\; 1\leq i_1<i_2<\cdots <i_r\leq m, \; \rank(A(i_1,i_2,\ldots,i_r))=r\}.\]
\end{theorem}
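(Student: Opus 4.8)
The plan is to reduce the identity to the rank-factorization characterization of the Moore–Penrose inverse. First I would recall the standard fact that if $A=PQ$ with $P\in\mathbb R^{n\times r}$, $Q\in\mathbb R^{r\times m}$ both of rank $r$ (a full-rank factorization), then $A^\dagger=Q^\intercal(QQ^\intercal)^{-1}(P^\intercal P)^{-1}P^\intercal$; equivalently, $A^\dagger$ is the unique matrix $X$ satisfying the four Penrose equations in \eqref{1}. So it suffices to verify that the matrix on the right-hand side of \eqref{MPdet}, call it $X$, satisfies $AXA=A$, $XAX=X$, $(AX)^\intercal=AX$, and $(XA)^\intercal=XA$.

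The key structural observation is that for any index tuple $(i_1,\dots,i_r)\in\mathcal M$, the matrix $A(i_1,\dots,i_r)$ has the same column space is contained in that of $A$ and, since it has rank $r=\rank A$, in fact shares the column space of $A$; dually the matrices $A(i_1,\dots,i_r)^\dagger$ all have row space equal to the row space of $A^\dagger$ (which is the column space of $A^\intercal$). Consequently $AX$ and $XA$ are, up to the scalar weights, averages of the projectors $AA(i_1,\dots,i_r)^\dagger$ and $A(i_1,\dots,i_r)^\dagger A$. I would next invoke the Cauchy–Binet formula: $\vol^2 A=\sum_{(i_1,\dots,i_r)}\big(\det A[\,\cdot\,; i_1,\dots,i_r]\big)^2$ where the sum runs over all $r$-subsets of columns, and the terms with $\rank<r$ contribute zero, so the normalizing constant $\vol^2 A$ is exactly $\sum_{(i_1,\dots,i_r)\in\mathcal M}\vol^2 A(i_1,\dots,i_r)$. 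This makes $X$ a genuine convex combination of the matrices $A(i_1,\dots,i_r)^\dagger$ with weights $\lambda_{(i_1,\dots,i_r)}=\vol^2 A(i_1,\dots,i_r)/\vol^2 A$ summing to $1$.

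With that in hand, the verification proceeds as follows. For the symmetry conditions, I claim $A\,A(i_1,\dots,i_r)^\dagger = AA^\dagger$ for every tuple in $\mathcal M$: indeed $A(i_1,\dots,i_r)^\dagger$ acts as a right inverse on the column space of $A(i_1,\dots,i_r)=\operatorname{col}(A)$ and kills its orthogonal complement, so $A(i_1,\dots,i_r)\,A(i_1,\dots,i_r)^\dagger$ is the orthogonal projector onto $\operatorname{col}(A)$; since the nonzero columns of $A(i_1,\dots,i_r)$ are columns of $A$, one checks $A\,A(i_1,\dots,i_r)^\dagger$ equals this same projector. Summing against the weights $\lambda$ and using $\sum\lambda=1$ gives $AX=AA^\dagger$, which is symmetric; an analogous argument on the row side, using that each $A(i_1,\dots,i_r)^\dagger A$ is the orthogonal projector onto $\operatorname{row}(A)=\operatorname{col}(A^\intercal)$, gives $XA=A^\dagger A$, also symmetric. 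Then $AXA=AA^\dagger A=A$ and $XAX=X(AX)=X\,AA^\dagger$; using $X=\sum\lambda\,A(i_1,\dots,i_r)^\dagger$ and $A(i_1,\dots,i_r)^\dagger AA^\dagger=A(i_1,\dots,i_r)^\dagger A\cdot A^\dagger$ and noting $A(i_1,\dots,i_r)^\dagger A$ restricted appropriately returns $A(i_1,\dots,i_r)^\dagger$, we get $XAX=X$. All four Penrose equations hold, so $X=A^\dagger$.

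The step I expect to be the main obstacle is the pair of projector identities $A\,A(i_1,\dots,i_r)^\dagger=AA^\dagger$ and $A(i_1,\dots,i_r)^\dagger A=A^\dagger A$: one must argue carefully that padding the deleted columns with zeros does not change the relevant four-subspaces picture, precisely because the rank is preserved at $r$. Once these are established, the Cauchy–Binet bookkeeping for the normalization and the convex-combination manipulation of the Penrose equations are routine. (This argument is the one given in \cite{bap, AOM}; I would cite it and include only the short verification above.)
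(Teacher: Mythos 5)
Your verification of three of the four Penrose equations can be made to work: for $B:=A(i_1,\ldots,i_r)$ with $(i_1,\ldots,i_r)\in\mathcal M$ one has $\operatorname{col}(B)=\operatorname{col}(A)$, the nonzero rows of $B^\dagger$ sit in positions $i_1,\ldots,i_r$ while the nonzero columns of $A-B$ sit in the complementary positions, so $(A-B)B^\dagger=O$ and $AB^\dagger=BB^\dagger=AA^\dagger$; combined with the bookkeeping identity $\vol^2A=\sum_{\mathcal M}\vol^2A(i_1,\ldots,i_r)$ this gives $AX=AA^\dagger$, hence $AXA=A$, $(AX)^\intercal=AX$, and $XAX=\sum\lambda\,B^\dagger AA^\dagger=\sum\lambda\,B^\dagger BB^\dagger=X$.

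The genuine gap is the fourth equation, $(XA)^\intercal=XA$. Your claim that each $A(i_1,\ldots,i_r)^\dagger A$ is the orthogonal projector $A^\dagger A$ onto $\operatorname{row}(A)$ is false: $B^\dagger B$ projects onto $\operatorname{row}(B)=\operatorname{span}\{e_{i_1},\ldots,e_{i_r}\}$, which is not $\operatorname{row}(A)$ in general, and $B^\dagger(A-B)\neq O$ because $\operatorname{row}(B^\dagger)=\operatorname{col}(B)=\operatorname{col}(A)$ is not orthogonal to the columns of $A-B$. A minimal counterexample is $A=[1\;\,1]$ and $B=A(1)=[1\;\,0]$, for which $B^\dagger A=\left[\begin{smallmatrix}1&1\\0&0\end{smallmatrix}\right]$ is not even symmetric, while $A^\dagger A=\frac12\left[\begin{smallmatrix}1&1\\1&1\end{smallmatrix}\right]$. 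Note that your argument for $AX=AA^\dagger$ is weight-independent: it would apply verbatim to any convex combination of the $B^\dagger$ (e.g.\ with uniform weights), yet such a combination is generally not $A^\dagger$. So the only place where the specific weights $\vol^2A(i_1,\ldots,i_r)$ can (and must) enter is precisely the step you have asserted rather than proved, namely $\sum_{\mathcal M}\vol^2 A(i_1,\ldots,i_r)\,A(i_1,\ldots,i_r)^\dagger A=\vol^2A\cdot A^\dagger A$. This requires an actual computation (a Cramer's rule/Cauchy--Binet argument on $r\times r$ minors, as in the cited sources); without it the proof is incomplete.
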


According to Lemma \ref{rank} the rank of an incidence matrix of a signed graph $\Gamma$ is either $n-1$, if $\Gamma$ is balanced or $n$ if $\Gamma$ is unbalanced. In addition,  by Lemma \ref{det}, maximal rank is attained through spanning trees, in case $\Gamma$ is balanced or through  unbalanced unicyclic graphs for $\Gamma$ balanced.

A signed $TU$-subgraph $H$ of $\Gamma$ is a subgraph whose components are trees or unbalanced unicyclic graphs with $c(H)$ denoting the number of unbalanced unicyclic components. By Lemmas \ref{rank} and \ref{det}, we have the following.

\begin{theorem}\label{vol}
Let $\Gamma$ be a connected signed graph on $n\geq 2$ vertices $1,2,\ldots,n$ with incidence matrix $N$. Let $H$ be a spanning signed subgraph of $\Gamma$ with incidence matrix $N_H$.
\begin{enumerate}
    \item[(a)] If $\Gamma$ is balanced and $H$ is a spanning tree of $\Gamma$, then $\rank(N)=\rank(N_H)=n-1$ and for all $i=1,2,\ldots,n$,
    \[\det(N_H(i;))=\pm 1,\]
   and consequently $\vol^2(N_H)=n$. If $H$ is any other kind of spanning subgraph of $\Gamma$ with $n-1$ edges, then $\det(N_H(i;))=0$
    for all $i=1,2,\ldots,n$.
    
    \item[(b)] If $\Gamma$ is unbalanced and $H$ is a spanning $TU$-subgraph of $\Gamma$ with $n$ edges consisting of $c(H)$ unbalanced unicyclic graphs,
    then $\rank(N)=\rank(N_H)=n$ and 
    \[\det(N_H)=\pm 2^{c(H)},\] 
    and consequently $\vol^2(N_H)=4^{c(H)}$.
    If $H$ is any other kind of spanning subgraph of $\Gamma$ with $n$ edges, then $\det(N_H)=0$.
\end{enumerate}
\end{theorem}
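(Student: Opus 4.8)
The plan is to reduce both parts to Lemmas~\ref{rank} and~\ref{det} by way of two elementary structural facts about incidence matrices of disconnected signed graphs. First, if we index rows by vertices and columns by edges component by component, then $N_H$ is block diagonal with one block $N_C$ for each connected component $C$ of $H$; hence $\rank(N_H)=\sum_C\rank(N_C)$, and, when $N_H$ is square, $\det(N_H)=\pm\prod_C\det(N_C)$. Second, by Lemma~\ref{rank}, a connected component $C$ of $H$ contributes $\rank(N_C)=|V(C)|-1$ if $C$ is balanced and $\rank(N_C)=|V(C)|$ if $C$ is unbalanced. With these in hand the argument becomes a matter of edge counting against Lemma~\ref{det}.

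For part (a): since $\Gamma$ is balanced, $\rank(N)=n-1$ by Lemma~\ref{rank}. A spanning tree $H$ is connected and acyclic, hence balanced, so $\rank(N_H)=n-1$ as well; as $N_H$ is $n\times(n-1)$, its maximal minors are precisely the numbers $\det(N_H(i;))$, each equal to $\pm1$ by Lemma~\ref{det}, and therefore $\vol^2(N_H)=\sum_{i=1}^n\det(N_H(i;))^2=n$. If instead $H$ has $n-1$ edges but is not a tree, then $H$ is disconnected (a connected graph on $n$ vertices with $n-1$ edges is a tree), say with $k\ge2$ components; every cycle of $H$ is a cycle of the balanced graph $\Gamma$, hence balanced, so every component of $H$ is balanced and $\rank(N_H)=\sum_C(|V(C)|-1)=n-k\le n-2$. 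Consequently every $(n-1)\times(n-1)$ submatrix of $N_H$ is singular, i.e.\ $\det(N_H(i;))=0$ for all $i$.

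For part (b): $\rank(N)=n$ by Lemma~\ref{rank}. Write $H$ as having $k$ components, $c(H)$ of them unbalanced unicyclic and the remaining $k-c(H)$ trees; counting edges gives $\#E(H)=\big(\sum_C|V(C)|\big)-(k-c(H))=n-(k-c(H))$, so the hypothesis $\#E(H)=n$ forces $k=c(H)$, i.e.\ \emph{every} component of $H$ is unbalanced unicyclic. Block diagonality then gives $\rank(N_H)=\sum_i|V(C_i)|=n$ and $\det(N_H)=\pm\prod_{i=1}^{c(H)}\det(N_{C_i})$. The one step that is not pure bookkeeping is the claim $\det(N_{C_i})=\pm2$ for an unbalanced unicyclic $C_i$: I would prove it by expanding $\det(N_{C_i})$ along the column of an edge $e=\{u,v\}$ lying on the unique (unbalanced) cycle of $C_i$; only the rows of $u$ and $v$ contribute, and each surviving minor is the incidence matrix of the spanning tree $C_i-e$ with one row deleted, hence $\pm1$ by Lemma~\ref{det}, so $\det(N_{C_i})\in\{-2,0,2\}$, and it is nonzero since $\rank(N_{C_i})=|V(C_i)|$. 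Thus $\det(N_H)=\pm2^{c(H)}$ and, $N_H$ being square, $\vol^2(N_H)=\det(N_H)^2=4^{c(H)}$. For the final assertion, if a spanning subgraph $H$ with $n$ edges had $\det(N_H)\ne0$, then $\rank(N_H)=n=\sum_C|V(C)|$ would force every component to be unbalanced, and then $\sum_C\#E(C)=n=\sum_C|V(C)|$ together with the fact that a connected unbalanced graph on $|V(C)|$ vertices has at least $|V(C)|$ edges would force every component to be unicyclic, so $H$ would already be a $TU$-subgraph of the stated type; contrapositively, any other spanning subgraph with $n$ edges has $\det(N_H)=0$. The main obstacle, such as it is, is the determinant evaluation $\det(N_{C_i})=\pm2$; everything else is organizing Lemmas~\ref{rank} and~\ref{det} through the component and edge-count structure.
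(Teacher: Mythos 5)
Your proof is correct and follows essentially the same route as the paper's: both parts are reduced to Lemmas \ref{rank} and \ref{det} through the block-diagonal structure of $N_H$ over the components of $H$, with the degenerate cases disposed of by a rank and edge count. The only place you go beyond the paper is the cofactor expansion establishing $\det(N_{C})=\pm 2$ for an unbalanced unicyclic component $C$ (expanding along a cycle edge, invoking the tree case of Lemma \ref{det} for the two surviving minors, and using nonsingularity from Lemma \ref{rank} to exclude the value $0$); the paper simply cites Lemma \ref{det} for this step, and your derivation is a welcome clarification since that lemma, as literally stated, concerns $\det(N(i;))$, which is not a square matrix when the graph is unicyclic.
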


\begin{proof}
(a) Suppose $\Gamma$ is balanced and $H$ is a spanning tree of $\Gamma$. By Lemmas \ref{rank}, \ref{det}, $\rank(N)=\rank(N_H)=n-1$ and for $i \in \{1,2,\ldots,n\}$, $\det(N_H(i;))=\pm 1$. Then 
\[\vol^2(N_H)=\sum_{i=1}^n (\det(N_H(i;)))^2=n.\]
Now suppose $H$ is a spanning subgraph of $\Gamma$ with $n-1$ edges that is not a spanning tree of $\Gamma$. Then $H$ has more than one connected (balanced) component and consequently $\rank(N_H(i;))<n-1$. Thus  $\det(N_H(i;))=0$.\\ 

(b) Suppose $\Gamma$ is unbalanced and $H$ is a spanning $TU$-subgraph of $\Gamma$ with $n$ edges consisting of $c(H)$ unbalanced unicyclic graphs. By Lemma \ref{rank}, $\rank(N)=\rank(N_H)=n$. By Lemma \ref{det} the determinant of the submatrix of $N_H$ corresponding to each unbalanced unicyclic graph is $\pm 2$. Thus $\det(N_H)=(\pm 2)^{c(H)}$. Then 
\[\vol^2(N_H)= (\det(N_H))^2=4^{c(H)}.\]
Now suppose $H$ is a spanning subgraph of $\Gamma$ with $n$ edges whose connected components are not all unbalanced unicyclic graphs. If $H$ has a component $H_1$ such that $|E(H_1)|<|V(H_1)|$, then the columns of $N_H$ corresponding to $V(H_1)$ are linearly dependent and consequently $\det(N_H)=0$. Now suppose $H$ has $t$ unicyclic components $H_1,H_2,\ldots,H_t$. Since not all of $H_1,H_2,\ldots,H_t$ are unbalanced, say $H_1$ is balanced. Then the determinant of the submatrix of $N_H$ corresponding to $H_1$ is $0$. Thus 
\[\det(N_H)=\pm \prod_{i=1}^n \det(N_{H_i})=0.\]
\end{proof}

In what follows by $\tau(\Gamma)$  we denote the number of spanning trees of $\Gamma$, while by $\mathcal{TU}_n(\Gamma)$ we denote the set of $TU$-subgraphs of $\Gamma$ on $n$ edges.

\begin{corollary}
Let $\Gamma$ be a connected signed graph on $n$ vertices with an incidence matrix $N$ and Laplacian matrix $L$. 
\begin{enumerate}
\item[(a)] If $\Gamma$ is balanced, then $\det(L)=0$ and $\vol^2(N)=n\tau(\Gamma)$.

\item[(b)] If $\Gamma$ is unbalanced, then 
\[\det(L)=\vol^2(N)=\sum_{H\in \mathcal{TU}_n(\Gamma)} 4^{c(H)}.\]
\end{enumerate}
Moreover, $\frac{\det(L)}{4}\geq $ the number of unbalanced unicyclic graphs in $G$ and the equality holds if and only if $G$ is balanced or an unbalanced unicyclic graph.
\end{corollary}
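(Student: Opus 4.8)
\emph{Proof plan.} The plan is to read parts (a) and (b) off Theorem~\ref{vol} using the definition of $\vol$ (the square root of the sum of squares of the maximal minors) and the Cauchy--Binet formula for $L=NN^\intercal$, and then to prove the inequality by an injection argument whose equality case is analysed via contractions. For (a): $\Gamma$ balanced gives $\rank N=n-1$ by Lemma~\ref{rank}, so $\rank L=\rank(NN^\intercal)=n-1<n$ forces $\det L=0$; and each $(n-1)\times(n-1)$ submatrix of $N$ is $N_H(i;)$ for a deleted row $i$ and the spanning subgraph $H$ carried by the $n-1$ chosen columns, which by Theorem~\ref{vol}(a) has determinant $\pm1$ when $H$ is a spanning tree of $\Gamma$ and $0$ otherwise; summing the squares over the $n$ choices of $i$ gives $\vol^2 N=n\,\tau(\Gamma)$. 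For (b): $\rank N=n$, the $n\times n$ submatrices of $N$ are the matrices $N_H$ with $H$ a spanning subgraph on $n$ edges, and Theorem~\ref{vol}(b) gives $\det N_H=\pm 2^{c(H)}$ for $H\in\mathcal{TU}_n(\Gamma)$ and $0$ otherwise, so $\vol^2 N=\sum_{H\in\mathcal{TU}_n(\Gamma)}4^{c(H)}=\det L$ by Cauchy--Binet.

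For the last assertion, let $U$ denote the number of unbalanced cycles of $\Gamma$. If $\Gamma$ is balanced then $\det L=0=U$, so equality holds; assume $\Gamma$ is unbalanced. A short edge count shows an $n$-edge $TU$-subgraph is automatically spanning with every component unbalanced unicyclic, so every $H\in\mathcal{TU}_n(\Gamma)$ has $c(H)\ge1$, giving $\det L/4=\sum_{H\in\mathcal{TU}_n(\Gamma)}4^{\,c(H)-1}\ge|\mathcal{TU}_n(\Gamma)|$. For each unbalanced cycle $C$, the contraction $\Gamma/C$ is connected and hence has a spanning tree; lifting it to $\Gamma$ and adjoining $E(C)$ yields a connected spanning subgraph on $n$ edges, necessarily unicyclic with unique cycle $C$, hence a member of $\mathcal{TU}_n(\Gamma)$. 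Distinct unbalanced cycles produce distinct such subgraphs, so $|\mathcal{TU}_n(\Gamma)|\ge U$ and $\det L/4\ge U$.

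For the equality case with $\Gamma$ unbalanced, the two inequalities above are simultaneously tight precisely when (i) $c(H)=1$ for every $H\in\mathcal{TU}_n(\Gamma)$ (so each such $H$ is a connected spanning unicyclic subgraph) and (ii) the resulting surjection $H\mapsto(\text{unique cycle of }H)$ onto the set of unbalanced cycles is injective. The decisive observation is that the $H\in\mathcal{TU}_n(\Gamma)$ with unique cycle a prescribed $C$ correspond bijectively to the spanning trees of $\Gamma/C$ (via $H\mapsto E(H)\setminus E(C)$), so (ii) amounts to: $\Gamma/C$ is a tree for every unbalanced cycle $C$. Suppose $|E(\Gamma)|\ge n+1$ and fix an unbalanced cycle $C$; since $\Gamma/C$ is a tree, comparing its edge and vertex counts forces $C$ to have a chord $g$ in $\Gamma$, splitting $C$ into arcs $A_1,A_2$. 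As the numbers of negative edges of $g\cup A_1$ and $g\cup A_2$ sum modulo $2$ to the (odd) number in $C$, exactly one of these cycles---say $C'=g\cup A_1$---is unbalanced; but contracting $C'$ identifies the ends of $g$ and hence turns $A_2$ (of length $\ge2$, as $\Gamma$ is simple) into a cycle, so $\Gamma/C'$ is not a tree, contradicting (ii) for $C'$. Therefore $|E(\Gamma)|=n$ and $\Gamma$ is unicyclic, i.e.\ an unbalanced unicyclic graph; conversely such a $\Gamma$ has $\mathcal{TU}_n(\Gamma)=\{\Gamma\}$, $c(\Gamma)=1$ and $U=1$, so equality holds.

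The matrix bookkeeping and Cauchy--Binet are routine; the substantive step is the equality analysis, whose crux is the bijection between the $n$-edge spanning $TU$-subgraphs of $\Gamma$ with a prescribed unique cycle and the spanning trees of the corresponding contraction, together with the chord-and-sign argument that pins down equality to the unicyclic (or balanced) case.
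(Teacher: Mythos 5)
The paper states this corollary with no proof at all---it is offered as an immediate consequence of Theorem~\ref{vol} together with the $\vol$ formalism of Theorem~\ref{MPDet}. Your treatment of (a) and (b) is exactly the intended route (Cauchy--Binet applied to $L=NN^\intercal$ plus the minor evaluations of Theorem~\ref{vol}), and it is correct. The ``moreover'' clause is where you go well beyond the paper: the contraction bijection between spanning unicyclic members of $\mathcal{TU}_n(\Gamma)$ with prescribed cycle $C$ and spanning trees of $\Gamma/C$, and the chord-and-sign argument for the equality case, are sound and constitute a genuine proof of a claim the paper leaves entirely unjustified. Two remarks. First, you silently read ``the number of unbalanced unicyclic graphs in $G$'' as ``the number of unbalanced cycles of $\Gamma$''; this is the right call (counting all unbalanced unicyclic subgraphs would make the equality claim false already for an unbalanced unicyclic graph larger than its cycle, and counting spanning ones fails for a theta graph with two unbalanced cycles), but you should state the interpretation explicitly since the paper's phrasing is ambiguous. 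Second, condition (ii) is literally $\tau(\Gamma/C)=1$, and since chords of $C$ become loops in $\Gamma/C$, this says the \emph{loopless part} of $\Gamma/C$ is a tree rather than that $\Gamma/C$ itself is one; your sentence ``since $\Gamma/C$ is a tree, comparing its edge and vertex counts forces $C$ to have a chord'' is self-contradictory as written (a genuine tree admits no chords). The corrected statement is what you actually use---the $|E(\Gamma)|-n$ surplus edges must all be chords of $C$, whence a chord exists---so the argument survives, but that step should be rephrased.
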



In order to implement \eqref{MPdet} to compute $N^{\dagger}$ as main ingredients we need formulas for the Moore-Penrose inverse of the incidence matrix of a tree as well of an unbalanced unicyclic graph.
These are covered in subsequent sections.

\section{Signed trees and unbalanced unicyclic signed graphs}\label{sec:3}

\noindent We first introduce necessary notation. For a given signed tree $T$ on $n$ vertices $1,2,\ldots,n$, we introduce the {\it head} and {\it tail components} in $T\setminus e$ for any edge $e$. 

\begin{itemize}
\item If $e_i=\{l_i,m_i\}$  such that $l_i < m_i$ is  a negative edge, then the \textit{head component} of $T\setminus e_i$, denoted by $T_h(e_i)$, is the connected component of $T\setminus e_i$ containing the vertex $m_i$. The \textit{tail component} of $T\setminus e_i$, denoted by $T_t(e_i)$, is the connected component of $T\setminus e_i$ containing the vertex $l_i$. 

\item If $e_i=(l_i,m_i)$ is a positive edge, directed from $l_i$ to $m_i$, then the \textit{head component} of $T\setminus e_i$ is the component containing $m_i$, while the \textit{tail component} contains  $l_i$.
\end{itemize}

We recall that $|T_h(e_i)|+|T_t(e_i)|=n$, while according to  \cite[Lemma 2.4]{HM1}
$$\sum_{e_p\in E_h(i)} |T_t(e_p)|+\sum_{e_q\in E_T(i)} |T_h(e_q)|=n-1,$$
where $$E_h(i)=\{e_k\in E(G): e_k \text{ is incident at } i\text{ and } i\in T_h(e_k)\},$$  $$E_t(i)=\{e_k\in E(G): e_k \text{ is incident at } i\text{ and } i\in T_t(e_k)\}.$$

The  shortest path between a vertex $j$ and an edge $e_i$, that does not contain $e_i$ is denoted by $P_{e_i-j}$. The product of $\sigma(e_k)$, for $e_k$ being on  the path $P_{e_i-j}$ is denoted by $\sgn (P_{e_i-j})$. 

For a connected balanced graph $\Gamma$ on $n$ vertices $1,2,\ldots,n$, we define a new $n\times n$ matrix $S=[s_{ij}]$, we call the {\it path sign matrix} of $\Gamma$, $s_{ii}=1$ for all $i$ and $s_{ij}=1$ ($-1$) if there is a positive (resp. negative) path between distinct vertices $i$ and $j$. The corresponding sign we denote by $\sgn(P_{i-j})$. The role of $S$ in computation of Moore-Penrose inverse of balanced graphs is justified in following theorem.

\begin{theorem}\label{ps}
Let $\Gamma=(G,\sigma)$ be a connected signed graph on $n\geq 2$ vertices $1,2,\ldots,n$ with an incidence matrix $N$. 
\begin{enumerate}
    \item[(a)] If $\Gamma$ is unbalanced, then $NN^\dagger=I_n$. 
    
    \item[(b)] If $\Gamma$ is balanced, then 
    \[NN^\dagger=I_n-\frac{1}{n}S,\]
    where $S$ is the path sign matrix of $\Gamma$.
\end{enumerate}
\end{theorem}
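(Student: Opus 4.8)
The plan is to use the fact that $NN^\dagger$ is the orthogonal projection onto the column space of $N$ (equivalently onto $\operatorname{range}(N)=\operatorname{range}(NN^\intercal)=\operatorname{range}(L)$), and then to identify that projection explicitly in each case. For part (a), since $\Gamma$ is unbalanced, Lemma \ref{rank} gives $\operatorname{rank}(N)=n$, so the columns of $N$ span all of $\mathbb{R}^n$; hence $NN^\dagger=I_n$. For part (b), $\Gamma$ is balanced and $\operatorname{rank}(N)=n-1$, so $NN^\dagger$ is the projection onto an $(n-1)$-dimensional subspace, namely the orthogonal complement of $\ker(N^\intercal)$. The key is to pin down $\ker(N^\intercal)$: a vector $x$ lies in it iff $x^\intercal N=0$, i.e. for every edge $e_\ell=\{i,j\}$ we have $n_{i\ell}x_i+n_{j\ell}x_j=0$. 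Using the incidence-matrix sign conventions (i)--(iii), this says $x_i$ and $x_j$ are related by a factor $\pm 1$ determined by $\sigma(e_\ell)$; propagating along paths and using balancedness (so the relation is path-independent), one finds $\ker(N^\intercal)$ is one-dimensional, spanned by the vector $u$ with $u_1=1$ and $u_j=\operatorname{sgn}(P_{1-j})$ for $j\ne 1$. Then $NN^\dagger=I_n-\frac{uu^\intercal}{u^\intercal u}=I_n-\frac{1}{n}uu^\intercal$, and one checks that $uu^\intercal=S$: indeed $(uu^\intercal)_{ij}=u_iu_j=\operatorname{sgn}(P_{1-i})\operatorname{sgn}(P_{1-j})=\operatorname{sgn}(P_{i-j})=s_{ij}$, the last equality because concatenating the $1$--$i$ path with the $1$--$j$ path gives a walk from $i$ to $j$ whose sign, by balancedness, equals the sign of any $i$--$j$ path.

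In more detail, I would proceed as follows. First establish the general principle that for any real matrix $A$ of full row rank $\operatorname{rank}(A)=n$ we have $AA^\dagger=I_n$ (immediate from $A A^\dagger A=A$ and invertibility of $AA^\intercal$, or from the projection description), which disposes of (a). Second, for (b), verify that the vector $u$ defined above actually satisfies $N^\intercal u=0$: fix an edge $e_\ell=\{i,j\}$ with, say, $i<j$; by condition (iii), $n_{i\ell}n_{j\ell}=-\sigma(e_\ell)$, and since each entry is $\pm 1$ this gives $n_{j\ell}=-\sigma(e_\ell)n_{i\ell}$; on the other hand, walking from $1$ to $j$ through $i$ and then across $e_\ell$ shows $\operatorname{sgn}(P_{1-j})=\sigma(e_\ell)\operatorname{sgn}(P_{1-i})$ (here balancedness guarantees this equals the sign of the genuine shortest path $P_{1-j}$), so $u_j=\sigma(e_\ell)u_i$, and then $n_{i\ell}u_i+n_{j\ell}u_j=n_{i\ell}u_i-\sigma(e_\ell)n_{i\ell}\sigma(e_\ell)u_i=n_{i\ell}u_i-n_{i\ell}u_i=0$. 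Third, since $\operatorname{rank}(N^\intercal)=\operatorname{rank}(N)=n-1$ by Lemma \ref{rank}, $\ker(N^\intercal)=\operatorname{span}\{u\}$, so $NN^\dagger$, being the projection onto $\ker(N^\intercal)^\perp=\operatorname{range}(N)$, equals $I_n-\frac{1}{u^\intercal u}uu^\intercal$; as $u^\intercal u=n$ and $uu^\intercal=S$ by the computation above, the claimed formula follows.

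The main obstacle is the bookkeeping around signs: one must be careful that $\operatorname{sgn}(P_{i-j})$ is well defined (independence of the chosen path, which is exactly where balancedness enters — in a balanced graph any two $i$--$j$ walks have the same sign), and that the three defining conditions of the incidence matrix $N$ translate correctly into the recursion $u_j=\operatorname{sgn}(e_\ell)u_i$ along an edge. There is a small subtlety in that the path $P_{1-j}$ in the statement is the shortest path not containing a given edge, but for checking $N^\intercal u=0$ any path suffices by well-definedness, so this does not cause real trouble. Once the sign arithmetic is set up cleanly, the rest is the standard identification of a Moore--Penrose-inverse product with a rank-one-complement projection. Alternatively, one could bypass kernels entirely and just verify directly that $M:=I_n-\frac1n S$ satisfies $M=M^\intercal=M^2$, $MN=N$ (equivalently $SN=0$, which is the same sign computation as $N^\intercal u=0$ done column by column), and $\operatorname{rank}(M)=n-1$; these properties force $M=NN^\dagger$ since the orthogonal projection onto $\operatorname{range}(N)$ is unique.
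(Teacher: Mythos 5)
Your proposal is correct and follows essentially the same route as the paper: both arguments characterize the vectors orthogonal to the columns of $N$ as scalar multiples of the path-sign vector (using balancedness for well-definedness of $\sgn(P_{i-j})$), and then identify $I_n-NN^\dagger$ as the rank-one projection onto that line. The only cosmetic difference is that you obtain the factor $\frac1n$ from the standard formula $uu^\intercal/(u^\intercal u)$ for the projection onto $\operatorname{span}\{u\}$, whereas the paper derives it from the idempotency relation $(I_n-NN^\dagger)^2=I_n-NN^\dagger$ applied to the diagonal entries; your handling of part (a) via full row rank is, if anything, slightly more careful than the paper's.
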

\begin{proof}
Since $NN^\dagger N=N$, $(I_n-NN^\dagger)N=O_{n,m}$. Then each row of $I_n-NN^\dagger$ is orthogonal to each column of $N$. Suppose $x^T=[x_1,x_2,\ldots,x_n]$ is orthogonal to each column of $N$. Then $x_i-x_j=0$ for any positive edge $\{i,j\}$ and $x_i+x_j=0$ for any negative edge $\{i,j\}$  in $\Gamma$. Therefore, if there is a path $P_{i-j}$ between $i$ and $j$, then $x_i=\sgn(P_{i-j}) x_j$.\\

(a) If $\Gamma$ is unbalanced, then by Lemma \ref{rank}, $\rank\,N=n$ and $N$ is invertible, hence $N^\dagger=N^{-1}$.

(b) Suppose $\Gamma$ is balanced. Then $\rank\,N=n-1$ and all paths between two distinct vertices in $G$ have the same sign. For a fixed vertex $i$ of $G$, $x_v=\sgn(P_{i-v}) x_i$ for any vertex $v$ of $G$. Thus 
\[[x_1,x_2,\ldots,x_n]=x_i[\sgn(P_{i-1}),\sgn(P_{i-2}),\ldots,\sgn(P_{i-n})].\]
Consequently any row of $I_n-NN^\dagger$ is a multiple of $[\sgn(P_{i-1}),\sgn(P_{i-2}),\ldots,\sgn(P_{i-n})]$. 
 Suppose for each $i=1,2,\ldots,n$, $i$th row of  $I_n-NN^\dagger$ is 
$$c_i[\sgn(P_{i-1}),\sgn(P_{i-2}),\ldots,\sgn(P_{i-n})].$$
Note that if $c_1=c_2=\cdots=c_n=0$, then $NN^\dagger=I_n$ and consequently $n=\rank(NN^\dagger)\leq \rank(N)=n-1$, a contradiction. Thus $c_i\neq 0$ for some $i$ and by symmetry of $NN^\dagger$, $c_i\neq 0$ for all $i=1,2,\ldots,n$. Now observe that  $(I_n-NN^\dagger)^2=I_n-NN^\dagger$ because $N^\dagger NN^\dagger=N^\dagger$. Then the $(i,i)$-entry of $(I_n-NN^\dagger)^2$, that is $c_i^2n$, is same as the $(i,i)$-entry of $I_n-NN^\dagger$, which is $c_i \sgn(P_{i-i})=c_i$. Thus $c_i=\frac{1}{n}$ and the $(i,j)$-entry of $I_n-NN^\dagger$ is $\frac{\sgn(P_{i-j})}{n}$.   
\end{proof}

We next give combinatorial expression for the Moore-Penrose inverse of the incidence matrix of a tree.

\begin{theorem}\label{tree}
Let $\Gamma=(T,\sigma)$ be a signed tree on $n\geq 2$ vertices $1,2,\ldots,n$ and $n-1$ edges $e_1,e_2,\ldots,e_{n-1}$. Let $N$ be an incidence matrix of  $\Gamma$. Then the Moore-Penrose inverse $N^\dagger=[n_{i,j}^\dagger]$ of $N$ is given by
\begin{equation}\label{H}
n_{i,j}^\dagger=
 \frac{\sgn(P_{e_i-j})w(e_i)}{n}\begin{cases}
 |T_h(e_i)| & \text{ if } j \in T_t(e_i)\\
 -\sigma(e_i)|T_t(e_i)| & \text{ if } j \in T_h(e_i),
 \end{cases} 
\end{equation}
where $$w(e_i)= \begin{cases}
-1 & \text{if $e_i$ is negative with both negative ends}\\
\,\,\,\,1& \text{otherwise.}
\end{cases}$$

\end{theorem}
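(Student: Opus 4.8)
The plan is to verify directly that the matrix $X=[n_{i,j}^\dagger]$ defined by \eqref{H} satisfies the four Penrose conditions in \eqref{1}, exploiting the fact that a tree has a very rigid structure: deleting an edge $e_i$ splits $T$ into exactly two components $T_h(e_i)$ and $T_t(e_i)$, and every vertex $j$ lies in exactly one of them. First I would record the entries of $N$ itself in the same language: if $e_i=\{l_i,m_i\}$, then $\eta(m_i,e_i)$ and $\eta(l_i,e_i)$ are the two nonzero entries in column $i$, their product is $-\sigma(e_i)$, and the sign convention for ``head'' and ``tail'' is chosen precisely so that the path-sign bookkeeping $\sgn(P_{e_i-j})$ is multiplicative along paths. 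The weight $w(e_i)$ is the nuisance factor that appears only when a negative edge has two negative endpoints (i.e. $\eta(l_i,e_i)=\eta(m_i,e_i)=-1$ rather than $+1$); I would isolate a short lemma showing $w(e_i)\,\eta(m_i,e_i)=1$ in all four cases, so that $w(e_i)$ can be absorbed cleanly.

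The computational heart is to evaluate $(NX)_{ij}$ and $(XN)_{ij}$. For $XN$: the $(i,k)$ entry is $\sum_j n_{i,j}^\dagger N_{jk}$, and since column $k$ of $N$ has only the two nonzero entries at the endpoints of $e_k$, this collapses to a two-term sum. Splitting into cases according to whether $e_k$ lies in $T_h(e_i)$ or $T_t(e_i)$ (or equals $e_i$), the telescoping $|T_h(e_i)|+|T_t(e_i)|=n$ and the multiplicativity of $\sgn(P_{e_i-j})$ give $(XN)_{ik}=\delta_{ik}-\tfrac1n(\text{something symmetric in }i,k)$ — in fact one should get $XN = I_{n-1}-\tfrac1n M$ where $M$ is a symmetric matrix whose $(i,k)$ entry is $\sgn(P_{e_i-e_k})$ times a combinatorial count, and symmetry of $M$ is exactly the symmetry condition $(XN)^\intercal=XN$. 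For $NX$: the $(j,k)$ entry is $\sum_i N_{ji}\,n_{i,k}^\dagger$, a sum over the (two) edges incident to vertex $j$; using the identity $\sum_{e_p\in E_t(j)}|T_t(e_p)|+\sum_{e_q\in E_h(j)}|T_h(e_q)|=n-1$ quoted from \cite[Lemma 2.4]{HM1} one computes this to be $\delta_{jk}-\tfrac1n\sgn(P_{j-k})$, which is precisely $I_n-\tfrac1n S$ from Theorem \ref{ps}(b), and this matrix is manifestly symmetric, giving $(NX)^\intercal = NX$.

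Finally, the two ``middle'' conditions $NXN=N$ and $XNX=X$ follow formally once the products are known: $NXN = (I_n-\tfrac1n S)N = N - \tfrac1n SN$, and $SN=O$ because each column of $N$ corresponds to an edge $\{i,j\}$ and the column of $S$ relations $x_i=\sgn(P_{i-j})x_j$ along that edge force the relevant two-term sums to vanish (this is the same orthogonality observation used in the proof of Theorem \ref{ps}); similarly $XNX = X(I_n-\tfrac1n S) = X - \tfrac1n XS$, and $XS=O$ by the analogous cancellation, using that each row of $X$ indexed by $e_i$ is, up to scaling, $\sgn(P_{e_i-1}),\ldots,\sgn(P_{e_i-n})$ and hence lies in the orthogonal complement of the columns of $N$ just as in the rank argument. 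I expect the main obstacle to be purely bookkeeping: getting all the signs $\sgn(P_{e_i-j})$, $\sigma(e_i)$, $w(e_i)$ and the orientation entries $\eta$ to cancel correctly in the case analysis for $(XN)_{ik}$, particularly the sub-case $e_k=e_i$ and the sub-cases where the path from $e_i$ to an endpoint of $e_k$ does or does not pass through $e_k$ itself; a careful choice of notation for ``which side of $e_i$ vertex $j$ is on'' up front, together with the absorbed-$w$ lemma, should keep this manageable.
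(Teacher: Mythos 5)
Your overall architecture --- compute the two products $XN$ and $NX$, identify $NX$ with $I_n-\frac1n S$, and derive the remaining Penrose conditions formally --- is the same as the paper's, and handling $w(e_i)$ as a row scaling is also how the paper proceeds. But there is a concrete error at the heart of the plan: you predict $XN=I_{n-1}-\frac1n M$ for a nonzero symmetric matrix $M$, and treat the symmetry of $M$ as the content of the condition $(XN)^\intercal=XN$. In fact $XN=I_{n-1}$ exactly. Since $\Gamma$ is a tree, $N$ has full column rank $n-1$ (Lemma \ref{rank}), so $N^\dagger N$ is the orthogonal projection onto the row space of $N$, which is all of $\mathbb{R}^{n-1}$; the rank-deficient projector is $NN^\dagger=I_n-\frac1n S$, not $N^\dagger N$. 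This is also what the direct computation yields: the two-term sums $(XN)_{ij}$ telescope to $\delta_{ij}$ with no correction term. Once you have $XN=I_{n-1}$, the conditions $(XN)^\intercal=XN$ and $XNX=X$ are immediate, and the only remaining work is showing $NX=I_n-\frac1n S$ (manifestly symmetric) together with $SN=O$ to get $NXN=N$.

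The error propagates into your treatment of $XNX$. Because you do not use $XN=I_{n-1}$, you route through $XNX=X(NX)=X-\frac1n XS$ and must prove $XS=O$, which you justify by asserting that each row of $X$ is a scalar multiple of the sign vector $(\sgn(P_{e_i-1}),\ldots,\sgn(P_{e_i-n}))$. That is false: the row of $X$ indexed by $e_i$ has entries of two different magnitudes, $|T_h(e_i)|/n$ on the tail side and $|T_t(e_i)|/n$ on the head side, so it is not proportional to a $\pm1$ vector --- and if it were, pairing it with the corresponding column of $S$ would give $\pm n$ times the scalar, not $0$. (The identity $XS=O$ does hold, but via the cancellation $|T_t(e_i)|\,|T_h(e_i)|-|T_h(e_i)|\,|T_t(e_i)|=0$, not for the reason you give.) Two smaller slips: a tree vertex $j$ has $\deg(j)$ incident edges, not two, so the sum defining $(NX)_{jk}$ has $\deg(j)$ terms; and the identity quoted from \cite{HM1} pairs heads with tail sizes and tails with head sizes, i.e. $\sum_{e_p\in E_h(j)}|T_t(e_p)|+\sum_{e_q\in E_t(j)}|T_h(e_q)|=n-1$, whereas your version pairs them the other way and each of your summands would then contain $j$.
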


\begin{proof}
We first consider the case, when every negative edge has both positive ends.
Let $H=[h_{i,j}]$ be the  $(n-1)\times n$ matrix given by \eqref{H}. Then,

\begin{equation}
h_{i,j}=
 \frac{\sgn(P_{e_i-j})}{n}\begin{cases}
 |T_h(e_i)| & \text{ if } j \in T_t(e_i)\\
 |T_t(e_i)| & \text{ if } j \in T_h(e_i),\; e_i\in E^-(\Gamma)\\
 -|T_t(e_i)| & \text{ if } j \in T_h(e_i),\; e_i\in E^+(\Gamma). 
 \end{cases} 
\end{equation}

We show that $H=N^\dagger$, by  proving $HN=I_{n-1}$.

For $i,j\in \{1,\ldots,n-1\}$, suppose $e_i=\{l,m\}$ where $l < m$ and $e_j=\{r,s\}$. For $N=[n_{i,j}]$, the $(i,j)$-entry of $HN$ is given by
\[(HN)_{i,j}=\sum^{n}_{k=1} h_{i,k}n_{k,j} = 
\begin{cases} 
h_{i,r}+h_{i,s} & \text{ if } e_j \in E^-(\Gamma)\\
h_{i,r}-h_{i,s} & \text{ if } e_j=(r,s) \in E^+(\Gamma)\\
-h_{i,r}+h_{i,s} & \text{ if } e_j=(s,r) \in E^+(\Gamma).
\end{cases} 
\]
The values of $(HN)_{i,j}$ are computed through the following cases and subcases.

\noindent{\it Case 1.} $i=j$.\\
First note that $\sgn(P_{e_i-l})=\sgn(P_{e_i-m})=1$ (being the empty product).\\
Let $e_i \in E^-(\Gamma)$. Since $l<m$, $l\in T_t(e_i)$  and $m \in T_h(e_i)$. Then 
\[(HN)_{i,i}=\frac{\sgn(P_{e_i-l})}{n}|T_h(e_i)|+\frac{\sgn(P_{e_i-m})}{n}|T_t(e_i)|=\frac{1}{n} |T_h(e_i)| +\frac{1}{n} |T_t(e_i)|=\frac{1}{n}n=1.\]

Let $e_i \in E^+(\Gamma)$. Since $e_j=(l,m)$, then
 $l\in T_t(e_i)$, $m \in T_h(e_i)$, and
\[(HN)_{i,i}=\frac{\sgn(P_{e_i-l})}{n}|T_h(e_i)|-\frac{-\sgn(P_{e_i-m})}{n}|T_t(e_i)|=\frac{1}{n} |T_h(e_i)| +\frac{1}{n} |T_t(e_i)|=\frac{1}{n}n=1.\]

If $e_i=(m,l)$, then
 $m\in T_t(e_i)$, $l \in T_h(e_i)$, and 
\[(HN)_{i,i}=-\frac{-\sgn(P_{e_i-l})}{n}|T_t(e_i)|+\frac{\sgn(P_{e_i-m})}{n}|T_h(e_i)|=\frac{1}{n} |T_t(e_i)| +\frac{1}{n} |T_h(e_i)|=\frac{1}{n}n=1.\]

\noindent{\it Case 2.} $i\neq j$.\\
Since $e_i \not= e_j=\{r,s\}$ and $T$ is a tree, either both $r,s\in T_h(e_i)$ or both $r,s\in T_t(e_i)$. \\

\noindent{\it Subcase (a)} $r,s\in T_t(e_i)$.\\
Either $\sgn(P_{e_i-r})=\sgn(P_{l-r})=\sgn(P_{e_i-s})\sigma(e_j)$ or $\sgn(P_{e_i-s})=\sgn(P_{l-s})=\sgn(P_{e_i-r})\sigma(e_j)$. \\

If $e_j \in E^-(\Gamma)$, then $\sgn(P_{e_i-r})$ and $\sgn(P_{e_i-s})$ have the opposite signs and
\[(HN)_{i,j}=\frac{\sgn(P_{e_i-r})}{n}|T_h(e_i)|+\frac{\sgn(P_{e_i-s})}{n}|T_h(e_i)|=0.\]

If $e_j \in E^+(\Gamma)$, then $\sgn(P_{e_i-r})$ and $\sgn(P_{e_i-s})$ have the same sign and
\[(HN)_{i,j}= \frac{\sgn(P_{e_i-r})}{n}|T_h(e_i)|-\frac{\sgn(P_{e_i-s})}{n}|T_h(e_i)|=0.\]

\noindent{\it Subcase (b)} $r,s\in T_h(e_i)$.\\
Either $\sgn(P_{e_i-r})=\sgn(P_{m-r})=\sgn(P_{e_i-s})\sigma(e_j)$ or $\sgn(P_{e_i-s})=\sgn(P_{m-s})=\sgn(P_{e_i-r})\sigma(e_j)$. \\

If $e_j \in E^-(\Gamma)$, then $\sgn(P_{e_i-r})$ and $\sgn(P_{e_i-s})$ have the opposite signs and
\[(HN)_{i,j}=h_{i,r}+h_{i,s}= \frac{-\sgn(P_{e_i-r})}{n}|T_t(e_i)|+\frac{-\sgn(P_{e_i-s})}{n}|T_t(e_i)|=0.\]

If $e_j \in E^+(\Gamma)$, then $\sgn(P_{e_i-r})$ and $\sgn(P_{e_i-s})$ have the same sign and
\[(HN)_{i,j}=h_{i,r}-h_{i,s}= \frac{\sgn(P_{e_i-r})}{n}|T_t(e_i)|-\frac{\sgn(P_{e_i-s})}{n}|T_t(e_i)|=0.\]

Thus $HN=I_{n-1}$, $HN$ is symmetric, $HNH=H$, and $NHN=N$. It remains to show that $NH$ is symmetric as well. We show it by proving that
\[(NH)_{ij}=-\frac{\sgn(P_{i-j})}{n}.\]
We distinguish the following subsets of $E^+(\Gamma)$, resp. $E^-(\Gamma)$.
Let \begin{align*}
E^+_h(i)&:=\{e_k\in E^+(\Gamma)|\,e_k \,\mbox{is incident at}\,i \,\mbox{and}\, i\in T_h(e_k)\},\\
E^+_t(i)&:=\{e_k\in E^+(\Gamma)|\,e_k \,\mbox{is incident at}\,i \,\mbox{and}\, i\in T_t(e_k)\},\\
E^-_h(i)&:=\{e_k\in E^-(\Gamma)|\,e_k \,\mbox{is incident at}\,i \,\mbox{and}\, i\in T_h(e_k)\},\\
E^+_t(i)&:=\{e_k\in E^-(\Gamma)|\,e_k \,\mbox{is incident at}\,i \,\mbox{and}\, i\in T_t(e_k)\}.
\end{align*}
The values of $$(NH)_{ij}=-\sum_{k:k\in E^+_h(i)} h_{kj}+\sum_{k:k\in E^+_t(i)} h_{kj}+\sum_{k:k\in E^-_h(i)} h_{kj}+\sum_{k:k\in E^-_t(i)} h_{kj}$$
are computed in subsequent cases.

\noindent{\it Case 1.} Let  $i\neq j$, and suppose that $i-j$ path in $T$ uses the edge $e_l\in E^+_h(i).$ Then 
$$nh_{kj}=\begin{cases}
\sgn(P_{e_k-j}) |T_h(e_k)|&\mbox{if}\, \, k=l;\\
-\sgn(P_{e_k-j})|T_t(e_k)|&\mbox{if}\, \, e_k\in E^+_h(i), k\neq l;\\
 \sgn(P_{e_k-j})|T_h(e_k)|&\mbox{if}\, \, e_k\in E^+_t(i), k\neq l;\\
\sgn(P_{e_k-j})|T_t(e_k)|&\mbox{if}\, \, e_k\in E^-_h(i), k\neq l;\\
\sgn(P_{e_k-j})|T_h(e_k)|&\mbox{if}\, \, e_k\in E^-_t(i), k\neq l,
\end{cases}$$
and \begin{align*}
n(NH)_{ij}&=-\sgn(P_{e_l-j}) |T_h(e_l)|+\sum_{k:k\in E^+_h(i),\atop k\neq l}\sgn(P_{e_k-j}) |T_t(e_k)| +\sum_{k:k\in E^+_t(i),\atop k\neq l}  \sgn(P_{e_k-j}) |T_h(e_k)| \\&+\sum_{k:k\in E^-_h(i),\atop k\neq l} \sgn(P_{e_k-j})|T_t(e_k)|+\sum_{k:k\in E^-_t(i),\atop k\neq l} \sgn(P_{e_k-j})|T_h(e_k)|\\&=\sgn(P_{i-j})\Big(-|T_h(e_l)|+\sum_{k:k\in E^+_h(i),\atop k\neq l}|T_t(e_k)| +\sum_{k:k\in E^+_t(i),\atop k\neq l}  |T_h(e_k)| +\sum_{k:k\in E^-_h(i),\atop k\neq l} |T_t(e_k)|\\ &+\sum_{k:k\in E^-_t(i),\atop k\neq l} |T_h(e_k)|\Big)\\&=-\sgn(P_{i-j})(-|T_h(e_l)|-|T_t(e_l)|+\sum_{e\in E_h(i)}|T_t(e)|+\sum_{e\in E_t(i)}|T_h(e)|\\&=-n+(n-1)=-1=-\sgn(P_{i-j}).
\end{align*}

\noindent{\it Case 2.} Let  $i\neq j$, and suppose that $i-j$ path in $T$ uses the edge $e_r\in E^+_t(i).$ Then 
$$nh_{kj}=\begin{cases}
 -\sgn(P_{e_k-j})|T_t(e_k)|&\mbox{if}\, \, k=r;\\
-\sgn(P_{e_k-j})|T_t(e_k)|&\mbox{if}\, \, e_k\in E^+_h(i), k\neq r;\\
 \sgn(P_{e_k-j})|T_h(e_k)|&\mbox{if}\, \, e_k\in E^+_t(i), k\neq r;\\
\sgn(P_{e_k-j})|T_t(e_k)|&\mbox{if}\, \, e_k\in E^-_h(i), k\neq r;\\
\sgn(P_{e_k-j})|T_h(e_k)|&\mbox{if}\, \, e_k\in E^-_t(i), k\neq r
\end{cases}$$
and \begin{align*}
n(NH)_{ij}&=-\sgn(P_{e_r-j})|T_t(e_r)|+\sum_{k:k\in E^+_h(i),\atop k\neq r}\sgn(P_{e_k-j})|T_t(e_k)| +\sum_{k:k\in E^+_t(i),\atop k\neq r} \sgn(P_{e_k-j}) |T_h(e_k)|\\& +\sum_{k:k\in E^-_h(i),\atop k\neq r} \sgn(P_{e_k-j})|T_t(e_k)|\sum_{k:k\in E^-_t(i),\atop k\neq r} \sgn(P_{e_k-j})|T_h(e_k)|\\
&=\sgn(P_{i-j})\Big(-|T_t(e_r)|+\sum_{k:k\in E^+_h(i),\atop k\neq r}|T_t(e_k)| +\sum_{k:k\in E^+_t(i),\atop k\neq r}  |T_h(e_k)| +\sum_{k:k\in E^-_h(i),\atop k\neq r} |T_t(e_k)|\\ &+\sum_{k:k\in E^-_t(i),\atop k\neq r} |T_h(e_k)|\Big)=\sgn(P_{i-j})(-n+n-1)=-\sgn(P_{i-j}).
\end{align*}

\noindent{\it Case 3.} Let  $i\neq j$, and suppose that $i-j$ path in $T$ uses the edge $e_p\in E^-_h(i).$ Then 
$$nh_{kj}=\begin{cases}
 \sgn(P_{e_p-j})|T_h(e_p)|&\mbox{if}\, \, k=p;\\
\sgn(P_{e_k-j})|T_t(e_k)|&\mbox{if}\, \, e_k\in E^-_h(i), k\neq p;\\
 \sgn(P_{e_k-j})|T_h(e_k)|&\mbox{if}\, \, e_k\in E^-_t(i), k\neq p;\\
-\sgn(P_{e_k-j})|T_t(e_k)|&\mbox{if}\, \, e_k\in E^+_h(i), k\neq p;\\
\sgn(P_{e_k-j})|T_h(e_k)|&\mbox{if}\, \, e_k\in E^+_t(i), k\neq p
\end{cases}$$
and \begin{align*}
n(NH)_{ij}&=\sgn(P_{e_p-j})|T_h(e_p)|+\sum_{k:k\in E^+_h(i),\atop k\neq p}\sgn(P_{e_k-j})|T_t(e_k)| +\sum_{k:k\in E^+_t(i),\atop k\neq p} \sgn(P_{e_k-j}) |T_h(e_k)| \\ &+\sum_{k:k\in E^-_h(i),\atop k\neq p} \sgn(P_{e_k-j})|T_t(e_k)|+\sum_{k:k\in E^-_t(i),\atop k\neq p} \sgn(P_{e_k-j})|T_h(e_k)|\\&=
\sgn(P_{i-j})\Big(-|T_h(e_p)|+\sum_{k:k\in E^+_h(i),\atop k\neq p}|T_t(e_k)| +\sum_{k:k\in E^+_t(i),\atop k\neq p}  |T_h(e_k)| \\ &+\sum_{k:k\in E^-_h(i),\atop k\neq p}|T_t(e_k)|+\sum_{k:k\in E^-_t(i),\atop k\neq p} |T_h(e_k)|\Big)=-\sgn(P_{i-j}).
\end{align*}

\noindent{\it Case 4.} Let  $i\neq j$, and suppose that $i-j$ path in $T$ uses the edge $e_q\in E^-_t(i).$ Then 
$$nh_{kj}=\begin{cases}
 \sgn(P_{e_q-j})|T_t(e_q)|&\mbox{if}\, \, k=q;\\
\sgn(P_{e_k-j})|T_t(e_k)|&\mbox{if}\, \, e_k\in E^-_h(i), k\neq q;\\
 \sgn(P_{e_k-j})|T_h(e_k)|&\mbox{if}\, \, e_k\in E^-_t(i), k\neq q;\\
- \sgn(P_{e_q-j})|T_t(e_k)|&\mbox{if}\, \, e_k\in E^+_h(i), k\neq q;\\
 \sgn(P_{e_q-j})|T_h(e_k)|&\mbox{if}\, \, e_k\in E^+_t(i), k\neq q
\end{cases}$$
and \begin{align*}
n(NH)_{ij}&=\sgn(P_{e_q-j})|T_t(e_q)|+\sum_{k:k\in E^+_h(i),\atop k\neq q} \sgn(P_{e_q-j})|T_t(e_k)| +\sum_{k:k\in E^+_t(i),\atop k\neq q}  \sgn(P_{e_q-j}) |T_h(e_k)| \\ &+\sum_{k:k\in E^-_h(i),\atop k\neq q} \sgn(P_{e_k-j})|T_t(e_k)|+\sum_{k:k\in E^-_t(i),\atop k\neq q} \sgn(P_{e_k-j})|T_h(e_k)|\\
&=\sgn(P_{i-j})\Big(-|T_t(e_q)|+\sum_{k:k\in E^+_h(i),\atop k\neq q}|T_t(e_k)| +\sum_{k:k\in E^+_t(i),\atop k\neq q}  |T_h(e_k)| \\ &+\sum_{k:k\in E^-_h(i),\atop k\neq q} |T_t(e_k)|+\sum_{k:k\in E^-_t(i),\atop k\neq q}|T_h(e_k)|\Big)=-\sgn(P_{i-j}).
\end{align*}
 
If $i=j$, then
\begin{align*}(NH)_{ii}&=-\sum_{k:k\in E^+_h(i)} h_{ki}+\sum_{k:k\in E^+_t(i)} h_{ki}+\sum_{k:k\in E^-_h(i)} h_{ki}+\sum_{k:k\in E^-_t(i)} h_{ki}\\&=\sum_{k:k\in E^+_h(i)}|T_t(e_k)| +\sum_{k:k\in E^+_t(i)} |T_h(e_i)|+\sum_{k:k\in E^-_h(i)} |T_t(e_i)|+\sum_{k:k\in E^-_t(i)} |T_h(e_i)|
=n-1
\end{align*}

If a negative edge has both negative ends, then the corresponding row in $N^\dagger$ is multiplied by $-1$.
\end{proof}

\begin{example}
Consider the signed tree depicted in Figure \ref{fig:M tree}.
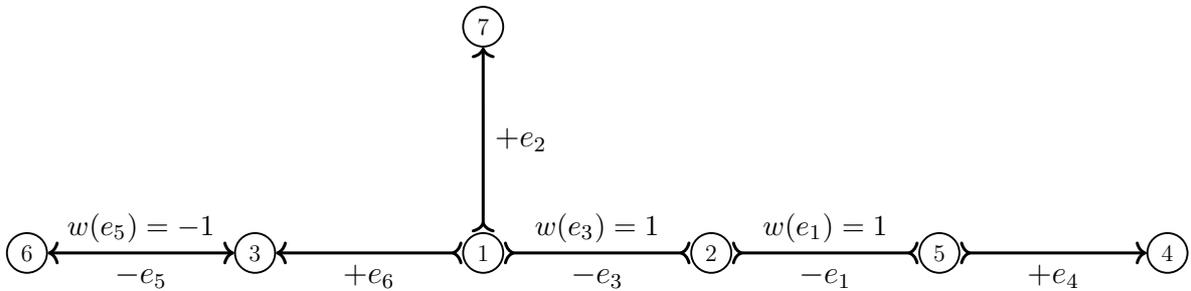
\begin{figure}[h]
	\centering
	    \def\NodeSC{0.75}
	    \def\Hdis{4.0}
	    \def\Vdis{4.0}
	    \begin{tikzpicture}[line width=0.25mm, baseline, scale=0.75]
	      \node[whiteDot,scale=\NodeSC] (A6) at (0,0) {\bf $6$};
		  	\path (A6) +(0:\Hdis) node[whiteDot,scale=\NodeSC] (A3) {\bf $3$};
		  	\path (A3) +(0:\Hdis) node[whiteDot,scale=\NodeSC] (A1) {\bf $1$};
	        \path (A1) +(0:\Hdis) node[whiteDot,scale=\NodeSC] (A2) {\bf $2$};
	        \path (A1) +(90:\Vdis) node[whiteDot,scale=\NodeSC] (A7) {\bf $7$};
	        \path (A2) +(0:\Hdis) node[whiteDot,scale=\NodeSC] (A5) {\bf $5$};
	        \path (A5) +(0:\Hdis) node[whiteDot,scale=\NodeSC] (A4) {\bf $4$};
	      \begin{pgfonlayer}{background}
	        \draw[line width=\lineWidth,<->] (A6) to node[midway, below] {\emph{$-e_5$}} (A3);
			\node[above] at ($(A6)!.5!(A3)$) {\small{$w(e_5) = -1$}};
	        \draw[line width=\lineWidth,<-<] (A3) to node[midway, below] {\emph{$+e_6$}} (A1);
	        \draw[line width=\lineWidth,>->] (A1) to node[midway, right] {\emph{$+e_2$}} (A7);
	        \draw[line width=\lineWidth,>-<] (A1) to node[midway, below] {\emph{$-e_3$}} (A2);
			\node[above] at ($(A1)!.5!(A2)$) {\small{$w(e_3) = 1$}};
	        \draw[line width=\lineWidth,>-<] (A2) to node[midway, below] {\emph{$-e_1$}} (A5);
			\node[above] at ($(A2)!.5!(A5)$) {\small{$w(e_1) = 1$}};
	        \draw[line width=\lineWidth,>->] (A5) to node[midway, below] {\emph{$+e_4$}} (A4);
	      \end{pgfonlayer}
	    \end{tikzpicture}
		\vspace{1cm}
		\caption{The smallest asymmetric tree.}
	\end{figure}\label{fig:M tree}

Then the incidence matrix 
$$N= \left[\begin{array}{rrrrrr}
0 & 1 & 1 & 0 & 0 & 1 \\
1 & 0 & 1 & 0 & 0 & 0 \\
0 & 0 & 0 & 0 & -1 & -1 \\
0 & 0 & 0 & -1 & 0 & 0 \\
1 & 0 & 0 & 1 & 0 & 0 \\
0 & 0 & 0 & 0 & -1 & 0 \\
0 & -1 & 0 & 0 & 0 & 0
\end{array}\right]\text{ and }
N^\dagger=\frac{1}{7} \left[\begin{array}{rrrrrrr}
-2 & 2 & -2 & 5 & 5 & 2 & -2 \\
1 & -1 & 1 & 1 & 1 & -1 & -6 \\
3 & 4 & 3 & -4 & -4 & -3 & 3 \\
1 & -1 & 1 & -6 & 1 & -1 & 1 \\
-1 & 1 & -1 & -1 & -1 & -6 & -1 \\
2 & -2 & -5 & 2 & 2 & 5 & 2
\end{array}\right]. $$
$$(N^\dagger)_{54}=-\frac{\sgn(P_{e_5-4})w(e_5)}{7}\sgn(e_5)|T_t(e_5)|=-\frac{1\cdot (-1)}{7}\cdot (-1)\cdot 1=-\frac{1}{7}$$ since $4\in T_h(e_5)$ and $w(e_5)=-1$.

\end{example}

\begin{remark}
If $\Gamma$ be a signed tree with all positive edges, then \eqref{H} boils down to $$n_{i,j}^\dagger=
 \frac{1}{n}\begin{cases}
 |T_h(e_i)| & \text{ if } j \in T_t(e_i)\\
 -|T_t(e_i)| & \text{ if } j \in T_h(e_i),
 \end{cases}$$  i.e., our formula matches with  the known formula given in \cite{bap}.

If $\Gamma$ be a signed tree with all negative edges having both ends equal to $1$, then
$$n_{i,j}^\dagger=
 \frac{(-1)^{d(e_i,j)}}{n}\begin{cases}
 |T_h(e_i)| & \text{ if } j \in T_t(e_i)\\
 |T_t(e_i)| & \text{ if } j \in T_h(e_i).
 \end{cases}$$ In other words, formula  \eqref{H} is  matching the one given in \cite{AOM}.
\end{remark}

We recall the notation introduced in \cite{HM2}. Let $G$ be a uniyclic graph with the unique cycle $C$. Then
$G\setminus e_i[C]$ is the connected component of $G\setminus e_i$ that contains $C$ if $e_i\notin C$ and 
$G\setminus e_i$ otherwise. Similarly, 
$G\setminus e_i(C)$ is the
connected component of $G\setminus e_i$ that does not contain  $C$  if $e_i\in C$ and 
empty graph, otherwise.  It $e_i\notin C$, then by $G_h(e_i)$ we denote the component of $G$ that contains the head of $e_i.$

As pointed out in  Theorem \ref{ps}  incidence matrix of any unbalanced signed graph is nonsingular. We next give combinatorial expressions for the entries of its inverse.

\begin{theorem}\label{uni}
Let $\Gamma=(G,\sigma)$ be an unbalanced unicyclic signed graph on $n$ vertices $1,2,\ldots,n$ and edges $e_1,e_2,\ldots,e_n$ with the cycle $C$ and the incidence matrix $N$. Then $N$ is invertible and 
\begin{equation}\label{unicyclic N inv}
(N^{-1})_{ij}=w(e_i) \begin{cases}
\frac{1}{2}\sgn\left(P_{e_i^t-j}\right) & \text{ if } e_i \in C\\
0 & \text{ if } e_i \not \in C \text{ and } j \in G\setminus e_i [C]\\
-\sgn(P_{e_i-j}) & \text{ if } e_i \in E^+(\Gamma)\setminus E(C) \text{ and } j \not \in G\setminus e_i [C] \text{ and } j\in G_h(e_i)\\
\sgn(P_{e_i-j}) & \text{ otherwise, }
\end{cases} 
\end{equation}
where $P_{e_i^t-j}$ denotes the path  between the tail of $e_i\in C$ and $j$ in the tree $G\setminus e_i$ and $w(e_i)=-1$ if $e_i$ is a negative edge with both negative ends and $1$ otherwise.
 \end{theorem}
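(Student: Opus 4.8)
The plan is to mimic the proof of Theorem \ref{tree}: construct the explicit $n\times n$ matrix $M=[m_{ij}]$ whose entries are given by the right-hand side of \eqref{unicyclic N inv}, and verify directly that $MN=I_n$. Since $N$ is square and invertible by Theorem \ref{ps}(a) (equivalently Lemma \ref{rank}, as $\Gamma$ is unbalanced), the identity $MN=I_n$ forces $M=N^{-1}$, and no separate check of the Moore--Penrose axioms is needed — this is the structural simplification over the tree case. As in the proof of Theorem \ref{tree}, I would first dispose of the sign-normalization factor $w(e_i)$ by assuming at the outset that every negative edge has both positive ends; the general case follows because changing a negative edge to have two negative ends negates the corresponding column-pair entry pattern of $N$ in exactly the row-indexed way that multiplying the $i$th row of $N^{-1}$ by $w(e_i)$ compensates.

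Next I would organize the verification of $(MN)_{ij}=\delta_{ij}$ according to the position of $e_i$ relative to the cycle $C$ and the position of the endpoints $r,s$ of $e_j=\{r,s\}$. The key combinatorial fact is that $G\setminus e_i$ is a tree whenever $e_i\in C$, and is a disjoint union $G_h(e_i)\sqcup (G\setminus e_i[C])$ of a tree and a unicyclic graph when $e_i\notin C$; in either case the two endpoints of any other edge $e_j$ lie in the same component of $G\setminus e_i$, which makes the relevant signs $\sgn(P_{e_i-r})$ and $\sgn(P_{e_i-s})$ (or their $e_i^t$-based analogues) either equal or opposite depending on whether $e_j$ is positive or negative — precisely the cancellation mechanism used in Case 2 of the tree proof. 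The diagonal entries $(MN)_{ii}$ split into the subcase $e_i\in C$, where the coefficient $\tfrac12$ combines with the two endpoints of $e_i$ (both reachable from the tail $e_i^t$ in $G\setminus e_i$, with signs $1$ and $\sigma(e_i)$, adjusted by the bidirection conventions (i)–(iii)) to give $\tfrac12+\tfrac12=1$, and the subcase $e_i\notin C$, where one endpoint of $e_i$ lies in $G\setminus e_i[C]$ (contributing $0$) and the other in $G_h(e_i)$ (contributing $\pm 1$ with the sign chosen by the third branch of \eqref{unicyclic N inv} so as to land on $+1$). I would also need the off-diagonal subcase where $e_i\in C$ but the path $P_{e_i^t-j}$ in $G\setminus e_i$ passes through one endpoint of $e_j$ and not the other; here the factor $\tfrac12$ against a single surviving $\pm 1$ term would naively give $\pm\tfrac12$, so the resolution must be that such a configuration forces $e_j\in C$ as well, and then both endpoints of $e_j$ are reached from $e_i^t$ through $G\setminus e_i$ (one directly, one around the rest of the cycle) with the two contributions combining — for a positive $e_j$ with opposite-direction-free... no, rather: the two $\pm\tfrac12$ terms add to $0$ for $e_j$ positive and to $\pm1\cdot\tfrac12\cdot 2$ needing a second look — this bookkeeping is the one genuinely delicate point.

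Indeed, the main obstacle I anticipate is exactly the interaction between the cycle edges: when both $e_i$ and $e_j$ lie on $C$, the tree $G\setminus e_i$ still contains $e_j$, and the path from $e_i^t$ to each endpoint of $e_j$ may wrap around the (now-broken) cycle, so the two contributions $h_{i,r}$ and $h_{i,s}$ are each $\pm\tfrac12$ times a sign, and one must check that the sign assignments — governed by the unbalancedness of $C$, i.e. $\sgn$ around the full cycle equals $-1$ — make these cancel (for $i\ne j$) or reinforce to $1$ (for $i=j$). Unbalancedness is essential here: it is precisely the relation $\prod_{e\in C}\sigma(e)=-1$ that breaks the would-be ambiguity and guarantees the correct $\pm$ pattern; a balanced cycle would make $N$ singular, consistent with the fact that the formula could not then close up. Once this cycle-edge bookkeeping is pinned down, the remaining cases ($e_i\notin C$, or $e_i\in C$ with $e_j$ off the cycle and both endpoints on the same side) reduce to the same tree-style cancellations already carried out in Theorem \ref{tree}, applied inside the tree $G\setminus e_i$ or inside the tree component $G_h(e_i)$, with the identity $|T_h|+|T_t|=n$ replaced by the analogous count in the relevant subtree. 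I would close by noting that restoring the $w(e_i)$ factors for negative edges with negative ends completes the proof.
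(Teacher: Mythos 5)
Your proposal is correct and follows essentially the same route as the paper's proof: reduce to the case where every negative edge has both positive ends, define the candidate matrix from \eqref{unicyclic N inv}, verify $MN=I_n$ by cases on the positions of $e_i$ and $e_j$ relative to $C$, and use the unbalancedness of $C$ (i.e. $\prod_{e\in C}\sigma(e)=-1$) to make the two $\tfrac12$-contributions reinforce on the diagonal and cancel off it. The delicate cycle-edge bookkeeping you flag is exactly the paper's Cases~1, 2 and~4, and your resolution of it is the one the paper carries out.
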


\begin{proof}
Again we prove that the result holds for signed graphs in which all negative edges have both positive ends. The final formula follows similarly as in the proof Theorem \ref{tree}.

Since $G$ is unbalanced, $\rank\, N=n$ and $N$ is invertible. Let  $A=[a_{ij}]$ where $a_{ij}$ is defined as in (\ref{unicyclic N inv}). It remains to show that $AN=I_{n}$.    

\[(AN)_{ij}=\begin{cases}
a_{iu}+a_{iv} & \text{ if }  e_j=\{u,v\} \in E^-(\Gamma)\\
a_{iu}-a_{iv} & \text{ if }  e_j=(u,v) \in E^+(\Gamma)\\
-a_{iu}+a_{iv} & \text{ if }  e_j=(v,u) \in E^+(\Gamma).
\end{cases} \]

First suppose $i=j$.  If $e_i=e_j \in C$, then ($C$ being unbalanced)
\[(AN)_{ij}=\begin{cases}
\frac{1}{2}\sgn\left(P_{e_{i}^t-u}\right)+\frac{1}{2}\sgn\left(P_{e_{i}^t-v}\right)=\frac{1}{2}+\frac{1}{2}=1 & \text{ if }  e_j=\{u,v\} \in E^-(\Gamma)\\
\frac{1}{2}\sgn\left(P_{e_{i}^t-u}\right)-\frac{1}{2}\sgn\left(P_{e_{i}^t-v}\right)=\frac{1}{2}-(-\frac{1}{2})=1 & \text{ if }  e_j=(u,v) \in E^+(\Gamma)\\
-\frac{1}{2}\sgn\left(P_{e_{i}^t-u}\right)+\frac{1}{2}\sgn\left(P_{e_{i}^t-v}\right)=-(-\frac{1}{2})+\frac{1}{2}=1 & \text{ if }  e_j=(v,u) \in E^+(\Gamma).
\end{cases} \]

If $e_i=e_j \notin C$, then WLOG let $u \in G\setminus e_i [C]$ and $v \notin G\setminus e_i [C]$. Then $a_{iu}=0$,  $a_{iv}=\pm \sgn(P_{e_i-v})$, and
\[(AN)_{ij}=\begin{cases}
0-(-\sgn(P_{e_i-v}))=1 & \text{ if }  e_j=(u,v) \in E^+(\Gamma)\\
-0+\sgn(P_{e_i-v}) =1 & \text{ otherwise}.
\end{cases} \]

\bigskip
\noindent Now suppose $i\neq j$ and $e_j=\{u,v\}$. Based on the positions of $e_i, e_j$ with respect to $C$ we separate the following cases.\\

\noindent{\it Case 1.} $e_i\notin C$ and $e_j\in C.$

Since $e_i\notin C$ and $u,v\in G\setminus e_i [C]$, $a_{iu}=a_{iv}=0$ and $(AN)_{ij}=0$.\\

\noindent{\it Case 2.} $e_i,e_j\in C.$

If $e_j\in E^-(\Gamma)$, then $\sgn\left(P_{e_{i}^t-u}\right)$ and $\sgn\left(P_{e_{i}^t-v}\right)$ are of the  opposite signs ($C$ being unbalanced) and 
\[(AN)_{ij}=a_{iu}+a_{iv}
=\frac{1}{2}\sgn\left(P_{e_{i}^t-u}\right)
+\frac{1}{2}\sgn\left(P_{e_{i}^t-v}\right)=0. \]

If $e_j\in E^+(\Gamma)$, then $\sgn\left(P_{e_{i}^t-u}\right)$ and $\sgn\left(P_{e_{i}^t-v}\right)$ have the same sign and 
\[(AN)_{ij}=\pm(a_{iu}-a_{iv})
=\pm\left(\frac{1}{2}\sgn\left(P_{e_{i}^t-u}\right)
-\frac{1}{2}\sgn\left(P_{e_{i}^t-v}\right) \right)=0. \]

\bigskip
\noindent{\it Case 3.} $e_i,e_j\notin C.$

WLOG let $u$ be the vertex on $e_j$ that is closest to $e_i$. Then $\sgn(P_{e_i-v})=\sigma(e_j)\sgn(P_{e_i-u})$. 

If $e_j\in E^-(\Gamma)$, then $\sgn(P_{e_i-u})=-\sgn(P_{e_i-v})$ and
\[(AN)_{ij}=a_{iu}+a_{iv}
=\sgn(P_{e_i-u})+\sgn(P_{e_i-v})=0. \]

If $e_j\in E^+(\Gamma)$, then $\sgn(P_{e_i-u})=\sgn(P_{e_i-v})$  and 
\[(AN)_{ij}=\pm(a_{iu}-a_{iv})
=\pm\left( \sgn(P_{e_i-u})-\sgn(P_{e_i-v}) \right)=0. \]

\bigskip
\noindent{\it Case 4.} $e_i\in C$ and $e_j\notin C$

WLOG let $u$ be the vertex on $e_j$ that is closest to the tail of $e_i$ in the tree $G\setminus e_i$. Then $\sgn\left(P_{e_i^t-v}\right)=\sigma(e_j) \sgn\left(P_{e_i^t-u}\right)$.

If $e_j\in E^-(\Gamma)$, then $\sgn\left(P_{e_i^t-u}\right)$ and $\sgn\left(P_{e_i^t-v}\right)$ have opposite signs and 
\[(AN)_{ij}=a_{iu}+a_{iv}
=\frac{1}{2}\sgn\left(P_{e_i^t-u}\right)+\frac{1}{2}\sgn\left(P_{e_i^t-v}\right)=0. \]

If $e_j\in E^+(\Gamma)$, then $\sgn\left(P_{e_i^t-u}\right)$ and $\sgn\left(P_{e_i^t-v}\right)$ have the same sign and 
\[(AN)_{ij}=\pm(a_{iu}-a_{iv})
=\pm\left( \frac{1}{2}\sgn\left(P_{e_i^t-u}\right)-\frac{1}{2}\sgn\left(P_{e_i^t-v}\right) \right)=0. \]
This completes the proof.
\end{proof}

\begin{example}
For the signed unbalanced graph depicted in Figure \ref{fig:even unicyclic},  the  results of Theorem  \ref{uni} are exemplified  in computations of $(N^{-1})_{74}$ and $(N^{-1})_{26}.$

$$(N^{-1})_{74}=\frac{1}{2}w(e_7)\sgn(P_{e_7^t-4})=\frac{1}{2}\cdot 1\cdot(-1)=-\frac{1}{2}$$
$$(N^{-1})_{26}=w(e_2)\sgn(P_{e_2-6})=(-1)\cdot 1=-1.$$
\medskip

\begin{figure}[h]
	\centering
	\def\NodeSC{0.75}
	\def\Hdis{6.0}
	\def\Vdis{3.0}
	\begin{tikzpicture}[line width=0.25mm, baseline, scale=0.9]
		\node[whiteDot,scale=\NodeSC] (A6) at (0,0) {\bf $6$};
		\path (A6) +(90:\Vdis) node[whiteDot,scale=\NodeSC] (A8) {\bf $8$};
		\path (A6) +(0:\Hdis) node[whiteDot,scale=\NodeSC] (A9) {\bf $9$};
		\path (A8) -- (A9) node[midway,whiteDot,scale=\NodeSC] (A4) {\bf $4$};
		\path (A8) +(0:\Hdis) node[whiteDot,scale=\NodeSC] (A7) {\bf $7$};
		\path (A9) +(0:1.5*\Hdis) node[whiteDot,scale=\NodeSC] (A1) {\bf $1$};
		\path (A7) +(0:1.5*\Hdis) node[whiteDot,scale=\NodeSC] (A3) {\bf $3$};
		\path (A4) +(0:\Hdis) node[whiteDot,scale=\NodeSC] (A5) {\bf $5$};
		\path (A5) +(0:0.5*\Hdis) node[whiteDot,scale=\NodeSC] (A2) {\bf $2$};
	    \begin{pgfonlayer}{background}
	    	\draw[line width=\lineWidth,<->] (A6) to node[midway, below] {\emph{$-e_2$}} (A4);
			\node[above, rotate=25] at ($(A6)!.45!(A4)$) {\small{$w(e_2) = -1$}};
	        \draw[line width=\lineWidth,>->] (A4) to node[midway, above] {\emph{$+e_1$}} (A8);
	        \draw[line width=\lineWidth,>-<] (A4) to node[midway, above] {\emph{$-e_9$}} (A7);
	        \draw[line width=\lineWidth,<-<] (A4) to node[midway, below] {\emph{$+e_7$}} (A9);
	        \draw[line width=\lineWidth,>-<] (A7) to node[midway, above] {\emph{$-e_6$}} (A5);
	        \draw[line width=\lineWidth,>-<] (A9) to node[midway, below] {\emph{$-e_8$}} (A5);
	        \draw[line width=\lineWidth,>-<] (A5) to node[midway, above] {\emph{$-e_4$}} (A2);
	        \draw[line width=\lineWidth,>-<] (A2) to node[midway, below] {\emph{$-e_3$}} (A1);
	        \draw[line width=\lineWidth,>->] (A2) to node[midway, above] {\emph{$+e_5$}} (A3);
	      \end{pgfonlayer}
	  \end{tikzpicture}
\vspace{1cm}
\caption{An unbalanced even unicyclic graph $\Gamma_1$.}
\label{fig:even unicyclic}
\end{figure}
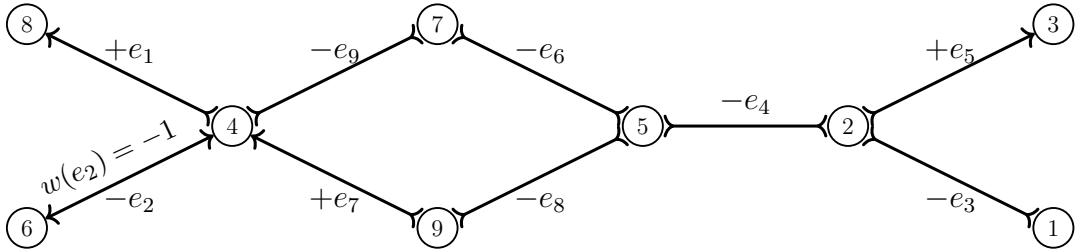

In particular, 
\[\scriptsize N=\left[\begin{array}{rrrrrrrrr}
0 & 0 & 1 & 0 & 0 & 0 & 0 & 0 & 0 \\
0 & 0 & 1 & 1 & 1 & 0 & 0 & 0 & 0 \\
0 & 0 & 0 & 0 & -1 & 0 & 0 & 0 & 0 \\
1 & -1 & 0 & 0 & 0 & 0 & -1 & 0 & 1 \\
0 & 0 & 0 & 1 & 0 & 1 & 0 & 1 & 0 \\
0 & -1 & 0 & 0 & 0 & 0 & 0 & 0 & 0 \\
0 & 0 & 0 & 0 & 0 & 1 & 0 & 0 & 1 \\
-1 & 0 & 0 &0 & 0 & 0 & 0 & 0 & 0 \\
0 & 0 & 0 & 0 & 0 & 0 & 1 & 1 & 0
\end{array}\right],\;
N^{-1}=\frac{1}{2}
\left[\begin{array}{rrrrrrrrr}
0 & 0 & 0 & 0 & 0 & 0 & 0 & -2 & 0 \\
0 & 0 & 0 & 0 & 0 & -2 & 0 & 0 & 0 \\
2 & 0 & 0 & 0 & 0 & 0 & 0 & 0 & 0 \\
-2 & 2 & 2 & 0 & 0 & 0 & 0 & 0 & 0 \\
0 & 0 & -2 & 0 & 0 & 0 & 0 & 0 & 0 \\
1 & -1 & -1 & -1 & 1 & 1 & 1 & -1 & -1 \\
-1 & 1 & 1 & -1 & -1 & 1 & 1 & -1 & 1 \\
1 & -1 & -1 & 1 & 1 & -1 & -1 & 1 & 1 \\
-1 & 1 & 1 & 1 & -1 & -1 & 1 & 1 & 1
\end{array}\right]
\]

\end{example}

\smallskip
\begin{remark}
If  $\Gamma$ is  an unbalanced odd unicyclic signed graph with all negative edges, with both positive ends, then the general  formula \eqref{unicyclic N inv} is matching the known formula given in \cite{HM2}:
$$(N^{-1})_{ij}= \begin{cases}
\frac{1}{2}(-1)^{d(e_i,j)} & \text{ if } e_i \in C\\
0 & \text{ if } e_i \not \in C \text{ and } j \in V(G)\setminus e_i [C]\\(-1)^{d(e_i,j)}& \text{ if } e_i \notin E(C) \text{ and } j \not \in V(G)\setminus e_i [C],
\end{cases}$$
due to $w(e_i)=1$ for any $e_i\in E(\Gamma)$.

\end{remark}

\section{Moore-Penrose inverse of the incidence matrix of a signed graph}\label{sec:4}

\noindent 
In this section we implement results of previous two, in order to obtain expression for the Moore-Penrose inverse of the incidence matrix of any connected signed graph.

\begin{theorem}\label{gen}
Let $\Gamma=(G,\sigma)$ be a connected signed graph on $n\geq 2$ vertices $1,2,\ldots,n$ and $m$ edges $e_1,e_2,\ldots,e_m$ with incidence matrix $N$, $w(e_i)$ being $-1$ for negative edges with both negative ends and $1$, otherwise. 
\begin{enumerate}
\item[(a)] If $\Gamma$ is balanced, then 
\begin{equation}
(N^\dagger)_{ij}= \frac{1}{\tau(\Gamma)} \sum_{\substack{T\in \mathcal{S}(\Gamma)\\e_i\in T}}  (N_T^\dagger)_{ij},
\end{equation}
where $\mathcal{S}(\Gamma)$ is the set of all spanning trees $T$ of $\Gamma$ and when $e_i\in T$,
\begin{equation}
(N_T^\dagger)_{ij}=
 \frac{w(e_i)\sgn(P_{e_i-j})}{n}\begin{cases}
 |T_h(e_i)| & \text{ if } j \in T_t(e_i)\\
 -\sigma(e_i)|T_t(e_i)| & \text{ if } j \in T_h(e_i)
 \end{cases} 
\end{equation}
($P_{e_i-j}$ being in $T$).

\item[(b)] If $\Gamma$ is unbalanced, then
\begin{equation}
(N^{-1})_{ij}= \frac{1}{\sum_{H\in \mathcal{TU}_n(\Gamma)} 4^{c(H)}}\sum_{\substack{H\in \mathcal{TU}_n(\Gamma)\\e_i\in H}} 4^{c(H)} (N_H^{-1})_{ij},
\end{equation}
where $\mathcal{TU}_n(\Gamma)$ is the set of all spanning $TU$-subgraphs $H$ of $\Gamma$ with $n$ edges consisting of $c(H)$ unbalanced unicyclic graphs $U^H_1,U^H_2,\ldots,U^H_{c(H)}$ with cycles $C^H_1,C^H_2,\ldots,C^H_{c(H)}$ respectively  and when $e_i\in U^H_r$ for some $r$,
\begin{equation}\label{Unbalanced G N inverse}
(N_H^{-1})_{ij}= \begin{cases}
\frac{1}{2}\sgn\left(P_{e_i^t-j}\right) & \text{ if } e_i \in C^H_r\\
0 & \text{ if } e_i \not \in C_r \text{ and } j \in U^H_r\setminus e_i [C^H_r]\\
-\sgn(P_{e_i-j}) & \text{ if } e_i \in E^+(U^H_r)\setminus E(C^H_r) \text{ and } j \not \in U^H_r\setminus e_i [C^H_r] \text{ and } j\in (U^H_r)_h(e_i)\\
\sgn(P_{e_i-j}) & \text{ otherwise, }
\end{cases} 
\end{equation}
where $P_{e_i^t-j}$ denotes the path  between the tail of $e_i\in C^H_r$ and $j$ in the tree $U_r^H\setminus e_i$.
\end{enumerate}

\end{theorem}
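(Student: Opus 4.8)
The plan is to derive both formulas directly from the determinantal expansion of Theorem~\ref{MPDet}, feeding in the $\vol^2$-values of Theorem~\ref{vol} together with the closed forms for signed trees and unbalanced unicyclic graphs from Theorems~\ref{tree} and~\ref{uni}.

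The first ingredient I would establish is an elementary fact about zero columns: if a real matrix $B$ is obtained from $A$ by inserting zero columns in prescribed positions, then $B^\dagger$ is obtained from $A^\dagger$ by inserting the matching zero rows; each of the four Penrose identities in~\eqref{1} is verified in one line. Applied to $N$, this says that for a column index set $(i_1,\dots,i_r)$ the matrix $N(i_1,\dots,i_r)^\dagger$ carries, in the rows indexed by $e_{i_1},\dots,e_{i_r}$, the Moore--Penrose inverse of the submatrix of $N$ formed by those columns, and zeros in all other rows. Next I would use Theorem~\ref{vol} to identify the index set $\mathcal M$ of~\eqref{MPdet}: for balanced $\Gamma$ one has $r=n-1$ and $\mathcal M$ is the collection of edge sets of spanning trees $T$, with $\vol^2 N(T)=\vol^2 N_T=n$; for unbalanced $\Gamma$ one has $r=n$ and $\mathcal M$ is the collection of spanning $TU$-subgraphs $H$ on $n$ edges, with $\vol^2 N(H)=\vol^2 N_H=4^{c(H)}$. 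Summing these contributions over all of $\mathcal M$ also gives the global normaliser: $\vol^2 N=n\,\tau(\Gamma)$ in the balanced case and $\vol^2 N=\sum_{H\in\mathcal{TU}_n(\Gamma)}4^{c(H)}$ in the unbalanced case.

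Substituting into~\eqref{MPdet}, a spanning tree $T$ (resp.\ a $TU$-subgraph $H$) contributes to the $(i,j)$-entry of $N^\dagger$ only when $e_i$ belongs to it, since otherwise row $e_i$ of $N(T)^\dagger$ (resp.\ of $N(H)^\dagger$) is zero; and the surviving contribution is $\frac{n}{n\tau(\Gamma)}(N_T^\dagger)_{ij}=\frac{1}{\tau(\Gamma)}(N_T^\dagger)_{ij}$ (resp.\ $\frac{4^{c(H)}}{\sum_H 4^{c(H)}}(N_H^{-1})_{ij}$), which is exactly the outer formula claimed. To finish, I would identify the inner entries: for balanced $\Gamma$ the matrix $N_T$ is an incidence matrix of the signed tree $T$, so Theorem~\ref{tree} yields the displayed expression for $(N_T^\dagger)_{ij}$; for unbalanced $\Gamma$ the matrix $N_H$ is block diagonal with one block per component $U_r^H$, each component having exactly as many edges as vertices (any component of a spanning $TU$-subgraph on $n$ edges is forced, by Lemma~\ref{rank}, to be unbalanced unicyclic, so the block is square and invertible). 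Hence $N_H^{-1}$ is block diagonal: its $(i,j)$-entry vanishes when $e_i$ and $j$ lie in different components and otherwise equals $(N_{U_r^H}^{-1})_{ij}$, which is precisely~\eqref{Unbalanced G N inverse} by Theorem~\ref{uni} applied to $U_r^H$.

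The argument carries little content beyond what has already been proved; the only genuinely new observation is the behaviour of the Moore--Penrose inverse under adjoining zero columns. The step requiring the most care is the block-diagonal reduction in part~(b): one must check that every component arising in $\mathcal{TU}_n(\Gamma)$ is unbalanced unicyclic, so that Theorem~\ref{uni} applies to it verbatim, and note that the off-diagonal blocks of $N_H^{-1}$ vanish --- this is what makes the inner formula~\eqref{Unbalanced G N inverse} well posed even though the path $P_{e_i-j}$ need not exist in $H$ when $j$ lies outside the component $U_r^H$ containing $e_i$.
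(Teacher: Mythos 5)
Your proposal is correct and follows essentially the same route as the paper: apply the determinantal expansion \eqref{MPdet}, use Theorem \ref{vol} to identify the surviving index sets and their $\vol^2$-weights, invoke Theorems \ref{tree} and \ref{uni} for the inner entries, and observe that zero-padding columns of $N_H$ zero-pads the corresponding rows of $N_H^\dagger$ (the paper records this via the factorization $N_H=\begin{pmatrix} N_{U^H_1}\oplus\cdots\oplus N_{U^H_{c(H)}}& O\end{pmatrix}P$). Your explicit attention to the block-diagonal reduction and to why every component of a spanning $TU$-subgraph on $n$ edges must be unbalanced unicyclic merely spells out details the paper leaves implicit.
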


\begin{proof}
The formulas follow by \eqref{MPdet} and Theorems \ref{vol}, \ref{tree}, \ref{uni} taking into account that for 
 a $TU$-spanning subgraph $H$ of $\Gamma$ with $c(H)$ unbalanced unicyclic graphs, $U^H_1, \ldots, U^H_{c(H)}$, with unique odd cycles $C^H_1,\ldots, C^H_{c(H)}$ respectively,   the incidence matrix $N_H$ is of the form
$$N_H=\begin{pmatrix} N_{U^H_1}\oplus\cdots\oplus N_{U^H_{c(H)}}& O
\end{pmatrix}P,$$ where
$N_{U^H_\ell}$ denotes  the incidence matrix of $U^H_\ell$, $1\leq \ell\leq c(H)$ and $P$ is the corresponding permutation matrix. Then  $$(N_H)^{\dagger}=P^\intercal\begin{pmatrix} (N_{U^H_1})^{-1}\oplus\cdots\oplus (N_{U^H_{c(H)}})^{-1}\\O
\end{pmatrix},$$  taking into account that $P^\intercal=P^{-1}$.
\end{proof}

Next, we provide the explicit formula for the Moore-Penrose inverse of balanced unicyclic graph.

\begin{theorem}\label{unib}
Let $\Gamma=(G,\sigma)$ be a connected balanced unicyclic graph on $n$ vertices $1,2,\ldots,n$ with incidence matrix $N$, the unique cycle $C$ with  $E(C)=\{e_1,\ldots, e_{|C|}\}$, $T_k=G\setminus e_k$, $1\leq k\leq |C|$ and $w(e_i)$ being $-1$ for negative edges with both negative ends and $1$, otherwise.  Then the following holds.
\begin{itemize}
    \item[(a)] If
$e_i\notin C$ and $j\in G\setminus e_i[C]$, then either $j\in T_h^{T_k}(e_i)$ for all $k\in\{1, \ldots, |C|\}$ or $j\in T_t^{T_k}(e_i)$ for all $k\in\{1, \ldots, |C|\}$.
\item[(b)] $$(N^\dagger)_{ij}=\frac{w(e_i)\sgn(P_{e_i-j})}{n|C|}\cdot\begin{cases}
|C||G\setminus e_i(C)|\,\,\,\mbox{if}\,\,\, e_i\notin E(C), j\in G\setminus e_i[C], j\in  T_t^{T_k}(e_i) \mbox{ for some $T_k$,}\\
-\sgn(e_i)|C||G\setminus e_i[C]|\,\,\,\mbox{if}\,\,\, e_i\notin E(C),  j\in G\setminus e_i[C], j\in  T_h^{T_k}(e_i)  \mbox{ for some $T_k$,}\\
\sum_{k=1\atop
e_i\in T_k}^{|C|}\phi_{T_k}(e_i,j)\,\,\,\mbox{if}\,\,\, e_i\in E(C), j\in V(G),
\end{cases}
$$
\end{itemize}
where $$\phi_{T}(e_i,j)=
\begin{cases}
|T_h(e_i)|\mbox{ if } j\in T_t(e_i)\\
-\sigma(e_i)|T_t(e_i)|\mbox{ if } j\in T_h(e_i).
\end{cases}$$ 
\end{theorem}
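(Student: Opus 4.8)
The plan is to derive Theorem~\ref{unib} as a direct specialization of Theorem~\ref{gen}(a) to the case where $\Gamma$ is a balanced unicyclic graph, and the whole argument amounts to unwinding the two sums in Theorem~\ref{gen}(a) in this very restricted setting. First I would observe that a connected unicyclic graph $\Gamma$ with unique cycle $C$ has exactly $|C|$ spanning trees, namely $T_k=G\setminus e_k$ for $e_k\in E(C)$, so $\tau(\Gamma)=|C|$ and $\mathcal S(\Gamma)=\{T_1,\dots,T_{|C|}\}$. Thus for every pair $(i,j)$ we have
\[
(N^\dagger)_{ij}=\frac{1}{|C|}\sum_{\substack{k=1\\ e_i\in T_k}}^{|C|}(N_{T_k}^\dagger)_{ij},
\]
and $(N_{T_k}^\dagger)_{ij}=\dfrac{w(e_i)\sgn(P_{e_i-j})}{n}\,\phi_{T_k}(e_i,j)$ by the formula in Theorem~\ref{gen}(a), with the sign convention $\phi_T(e_i,j)=|T_h(e_i)|$ if $j\in T_t(e_i)$ and $-\sigma(e_i)|T_t(e_i)|$ if $j\in T_h(e_i)$. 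Here one must check that $\sgn(P_{e_i-j})$ does not depend on which spanning tree $T_k$ is used: since $\Gamma$ is balanced, all $i$--$j$ walks carry the same sign, so the edge-path from $e_i$ to $j$ inside any $T_k$ has the common value $\sgn(P_{e_i-j})$, and likewise $w(e_i)$ is intrinsic to the edge $e_i$. So both of these factors pull out of the sum, leaving $(N^\dagger)_{ij}=\dfrac{w(e_i)\sgn(P_{e_i-j})}{n|C|}\sum_{k:\,e_i\in T_k}\phi_{T_k}(e_i,j)$, which is already the third (generic) case of part~(b).

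Next I would treat the case $e_i\notin E(C)$, where part~(a) of the statement must be proved first because it is exactly what makes the formula in part~(b) well defined. If $e_i\notin E(C)$, then $e_i\in T_k$ for \emph{all} $k$, so the sum runs over all $|C|$ trees. The key structural fact is that removing $e_i$ from $\Gamma$ splits $\Gamma$ into the component $G\setminus e_i[C]$ containing the cycle and the component $G\setminus e_i(C)$ not containing it, and this splitting does not interact with which edge $e_k\in E(C)$ is deleted: for a vertex $j$, being on the cycle side of $e_i$ (i.e.\ $j\in G\setminus e_i[C]$) versus the off-cycle side is determined entirely by $e_i$. So if $j\in G\setminus e_i[C]$, then $j$ sits on the same side of $e_i$ in every tree $T_k$ (the side where $C$ lies), giving part~(a): either $j\in T_h^{T_k}(e_i)$ for all $k$ or $j\in T_t^{T_k}(e_i)$ for all $k$, with the alternative decided by whether $m_i$ or $l_i$ ends up on the cycle side. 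In that situation $\phi_{T_k}(e_i,j)$ has the same \emph{type} for all $k$; moreover $|T_h^{T_k}(e_i)|$ (resp.\ $|T_t^{T_k}(e_i)|$), when it is the off-cycle side, equals $|G\setminus e_i(C)|$ independently of $k$, because the off-cycle component of $\Gamma\setminus e_i$ is untouched by deleting a cycle edge. Hence $\sum_{k=1}^{|C|}\phi_{T_k}(e_i,j)=|C|\cdot|G\setminus e_i(C)|$ in the tail case and $=-\sigma(e_i)\,|C|\cdot|G\setminus e_i[C]|$ in the head case (here one uses $|T_t^{T_k}(e_i)|=|G\setminus e_i[C]|$ when the cycle side is the tail side, since then the head side is the off-cycle part), which produces the first two branches of part~(b) after dividing by $n|C|$.

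Finally, for $e_i\in E(C)$ and arbitrary $j\in V(G)$, there is nothing further to do: $e_i\in T_k$ iff $k\neq i$ (i.e.\ iff $e_k\neq e_i$), the sign and $w$ factors have already been pulled out, and the remaining sum $\sum_{k:\,e_i\in T_k}\phi_{T_k}(e_i,j)$ is exactly the third branch of the displayed formula, with no simplification claimed. I would present this case last and briefly, noting only that here the summands genuinely depend on $k$ (different spanning trees give different head/tail components for a cycle edge $e_i$), which is why no closed form is offered. The main obstacle, and the part deserving the most care, is part~(a) together with the bookkeeping that $|T_h^{T_k}(e_i)|$ or $|T_t^{T_k}(e_i)|$ stabilizes to $|G\setminus e_i(C)|$ independently of $k$: this rests on the observation that deleting a cycle edge $e_k$ only rearranges the part of $\Gamma$ that contains $C$ and leaves every pendant subtree hanging off the cycle—in particular the off-cycle component of $\Gamma\setminus e_i$—combinatorially intact, so I would state and justify that observation explicitly before invoking it. The verification that $\sgn(P_{e_i-j})$ is tree-independent follows from balancedness exactly as in the proof of Theorem~\ref{ps}(b), and I would simply cite that.
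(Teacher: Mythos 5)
Your overall route is exactly the paper's: specialize Theorem~\ref{gen}(a), use $\tau(\Gamma)=|C|$ with spanning trees $T_k=G\setminus e_k$, pull the factor $w(e_i)\sgn(P_{e_i-j})/n$ out of the sum (legitimate by balancedness), and observe that for $e_i\notin E(C)$ the two sides of $G\setminus e_i$ persist unchanged as vertex sets in every $T_k\setminus e_i$, which yields part~(a) and collapses the sum over $k$. Your handling of part~(a), of the first branch, and of the $e_i\in E(C)$ branch is correct and in fact more explicit than the paper's own proof.

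The one step that does not hold together is your second branch, the ``head case.'' You claim $\sum_k\phi_{T_k}(e_i,j)=-\sigma(e_i)\,|C|\,|G\setminus e_i[C]|$ and justify it by ``one uses $|T_t^{T_k}(e_i)|=|G\setminus e_i[C]|$ when the cycle side is the tail side, since then the head side is the off-cycle part.'' But in this case $j\in T_h^{T_k}(e_i)$ and $j\in G\setminus e_i[C]$, so the cycle side is the \emph{head} side; the tail component is then the off-cycle part, $|T_t^{T_k}(e_i)|=|G\setminus e_i(C)|$, and $\phi_{T_k}(e_i,j)=-\sigma(e_i)\,|G\setminus e_i(C)|$, giving $\sum_k\phi_{T_k}(e_i,j)=-\sigma(e_i)\,|C|\,|G\setminus e_i(C)|$. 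Your parenthetical describes the configuration of the \emph{first} branch, not the second, and cannot produce the factor $|G\setminus e_i[C]|$. Carrying the computation out consistently shows that the second branch of the displayed formula (and the corresponding line of the paper's proof) should involve $|G\setminus e_i(C)|$ rather than $|G\setminus e_i[C]|$: in both branches $j$ lies on the cycle side, so the relevant size is always that of the component not containing $j$. You should either repair the justification and the resulting constant, or explicitly flag the discrepancy with the statement instead of reverse-engineering it.
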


\begin{proof}
Since $\Gamma$ is connected balanced unicyclic graph,  the number of spanning trees is  $\tau(G)=|C|$ and the spanning trees are $T_k, 1\leq k\leq |C|$. Let $e_i\notin C,$ and $j\in G\setminus e_i[C]$. Then for all $k$, $T_k\setminus e_i$ contains  $j$ either in the tail or in  the head. In addition,  $|T^{T_k}_t(e_i)|=|G\setminus e_i(C)|,$ while $|T^{T_k}_h(e_i)|=|G\setminus e_i[C)].$

By Theorem \ref{gen} (a)
$$(N^{\dagger})_{ij}=\frac{1}{|C|}\sum_{k=1}^{|C|} (N^\dagger_{T_k})_{ij},$$
where
$$(N^\dagger_{T_k})_{ij}=\frac{w(e_i)\sgn(P_{e_i-j})}{n}\begin{cases}
    |G\setminus e_i(C)|& \mbox{ if $ e_i\notin E(C), j\in G\setminus e_i[C], j\in T_t^{T_k}(e_i)$ }\\
    |G\setminus e_i[C]|& \mbox{ if $ e_i\notin E(C), j\in G\setminus e_i[C], j\in  T_h^{T_k}(e_i)$ }\\
    |T^{T_k}_h(e_i)|&\mbox{ if $ e_i\in C, j\in T^{T_k}_t(e_i)$ }\\
-\sigma(e_i) |T^{T_k}_t(e_i)|&\mbox{ if $ e_i\in C, j\in T^{T_k}_h(e_i)$. }
\end{cases}$$
Hence, 

$$\sum_{k=1}^{|C|} (N^\dagger_{T_k})_{ij}=\frac{w(e_i)\sgn(P_{e_i-j})}{n}\begin{cases}
   |C| |G\setminus e_i(C)|& \mbox{ if $ e_i\notin E(C), j\in G\setminus e_i[C], j\in T_t^{T_k}(e_i)$ }\\
   |C| |G\setminus e_i[C]|& \mbox{ if $ e_i\notin E(C), j\in G\setminus e_i[C], j\in  T_h^{T_k}(e_i)$ }\\
   \sum_{k=1\atop
e_i\in T_k}^{|C|}\phi_{T_k}(e_i,j)&\mbox{ if}\,\,\, e_i\in E(C), j\in V(G).
\end{cases}
$$
Finally, devision by $|C|$ in the previous equality leads to $(N^{\dagger})_{ij}$, i.e., to the  expression given in (b).
\end{proof}

\begin{example}
For the unbalanced graph, $\Gamma_2$ depicted in Figure \ref{fig:bi}, spanning $TU$-subgraphs on $9$ edges are enclosed in Figure \ref{tu_2}.
\tikzset{middlearrow/.style={
        decoration={markings,
            mark= at position 0.5 with {\arrow[scale=2]{#1}} ,
        },
        postaction={decorate}
    }
}
\tikzset{doublearrow/.style={
        decoration={markings,
            mark= at position 0.5 with {\arrow[scale=2]{#1}} ,
            mark= at position 0.55 with {\arrow[scale=2]{#1}} ,
        },
        postaction={decorate}
    }
}

\begin{figure}[h]
	\centering
	\def\NodeSC{0.75}
	\def\Hdis{6.0}
	\def\Vdis{3.0}
	\begin{tikzpicture}[line width=0.25mm, baseline, scale=0.9]
		\node[whiteDot,scale=\NodeSC] (A6) at (0,0) {\bf $6$};
		\path (A6) +(90:\Vdis) node[whiteDot,scale=\NodeSC] (A8) {\bf $8$};
		\path (A6) +(0:\Hdis) node[whiteDot,scale=\NodeSC] (A9) {\bf $9$};
		\path (A8) -- (A9) node[midway,whiteDot,scale=\NodeSC] (A4) {\bf $4$};
		\path (A8) +(0:\Hdis) node[whiteDot,scale=\NodeSC] (A7) {\bf $7$};
		\path (A9) +(0:1.5*\Hdis) node[whiteDot,scale=\NodeSC] (A1) {\bf $1$};
		\path (A7) +(0:1.5*\Hdis) node[whiteDot,scale=\NodeSC] (A3) {\bf $3$};
		\path (A4) +(0:\Hdis) node[whiteDot,scale=\NodeSC] (A5) {\bf $5$};
		\path (A5) +(0:0.5*\Hdis) node[whiteDot,scale=\NodeSC] (A2) {\bf $2$};
	    \begin{pgfonlayer}{background}
	    	\draw[line width=\lineWidth,>-<] (A6) to node[midway, below] {\emph{$-e_2$}} (A4);
	        \draw[line width=\lineWidth,>->] (A4) to node[midway, above] {\emph{$+e_1$}} (A8);
	        \draw[line width=\lineWidth,>-<] (A4) to node[midway, above] {\emph{$-e_9$}} (A7);
	        \draw[line width=\lineWidth,<-<] (A4) to node[midway, below] {\emph{$+e_7$}} (A9);
	        \draw[line width=\lineWidth,>-<] (A7) to node[midway, above] {\emph{$-e_6$}} (A5);
	        \draw[line width=\lineWidth,>-<] (A9) to node[midway, below] {\emph{$-e_8$}} (A5);
	        \draw[line width=\lineWidth,>-<] (A5) to node[midway, above] {\emph{$-e_4$}} (A2);
	        \draw[line width=\lineWidth,>-<] (A2) to node[midway, below] {\emph{$-e_3$}} (A1);
	        \draw[line width=\lineWidth,>->] (A2) to node[midway, above] {\emph{$+e_5$}} (A3);
	        \draw[line width=\lineWidth,>-<] (A1) to node[midway, right] {\emph{$-e_{10}$}} (A3);
	      \end{pgfonlayer}
	  \end{tikzpicture}
\vspace{1cm}
	\caption{An unbalanced  bicyclic graph $\Gamma_2$.}
	\label{fig:bi}
\end{figure}
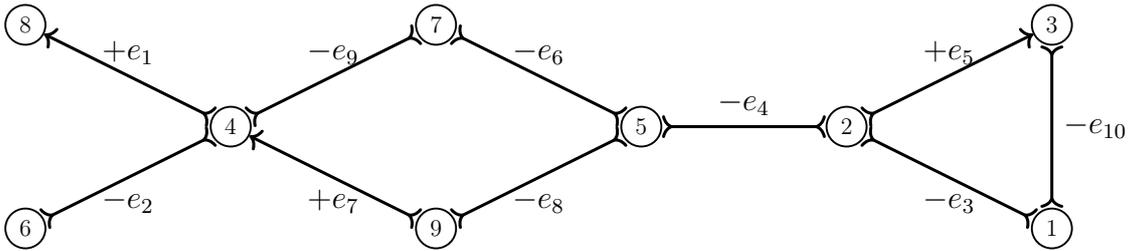

	\begin{figure}[h]
	\centering
		\def\dis{3cm}
	    \def\NodeSC{0.65}
	    \def\Hdis{4.0}
	    \def\Vdis{3.0}
	    \begin{tikzpicture}[name=G_11, line width=0.25mm, baseline, scale=0.55, descr/.style={sloped,below}]
	      \node[whiteDot,scale=\NodeSC] (A6) at (0,0) {\bf $6$};
		  	\path (A6) +(90:\Vdis) node[whiteDot,scale=\NodeSC] (A8) {\bf $8$};
		  	\path (A6) +(0:\Hdis) node[whiteDot,scale=\NodeSC] (A9) {\bf $9$};
		  	\path (A8) +(0:\Hdis) node[whiteDot,scale=\NodeSC] (A7) {\bf $7$};
		  	\path (A7) +(0:1.5*\Hdis) node[whiteDot,scale=\NodeSC] (A3) {\bf $3$};
		  	\path (A9) +(0:1.5*\Hdis) node[whiteDot,scale=\NodeSC] (A1) {\bf $1$};
			\path (A6) +(0:0.5*\Hdis) node[] (A44) {};
		  	\path (A44) +(90:0.5*\Vdis) node[whiteDot,scale=\NodeSC] (A4) {\bf $4$};
		  	\path (A4) +(0:\Hdis) node[whiteDot,scale=\NodeSC] (A5) {\bf $5$};
		  	\path (A5) +(0:0.5*\Hdis) node[whiteDot,scale=\NodeSC] (A2) {\bf $2$};
	      \begin{pgfonlayer}{background}
	        \draw[line width=\lineWidth,>-<] (A4) to node {} (A6);
	        \draw[line width=\lineWidth,>-<] (A4) to node {} (A7);
	        \draw[line width=\lineWidth,>->] (A4) to node {} (A8);
	        \draw[line width=\lineWidth,<-<] (A4) to node {} (A9);
	        \draw[line width=\lineWidth,>-<] (A5) to node {} (A7);
	        \draw[line width=\lineWidth,>-<] (A5) to node {} (A9);
	        \draw[line width=\lineWidth,>-<] (A5) to node {} (A2);
	        \draw[line width=\lineWidth,>-<] (A2) to node {} (A1);
	        \draw[line width=\lineWidth,>-<] (A1) to node {} (A3);
	      \end{pgfonlayer}
		  \node[anchor=north] at (current bounding box.south){$G_{11}$};
	    \end{tikzpicture}
		\hspace{0.5cm}
	    \begin{tikzpicture}[name=G_12, line width=0.25mm, baseline, scale=0.65, descr/.style={sloped,below}]
	      \node[whiteDot,scale=\NodeSC] (A6) at (0,0) {\bf $6$};
		  	\path (A6) +(90:\Vdis) node[whiteDot,scale=\NodeSC] (A8) {\bf $8$};
		  	\path (A6) +(0:\Hdis) node[whiteDot,scale=\NodeSC] (A9) {\bf $9$};
		  	\path (A8) +(0:\Hdis) node[whiteDot,scale=\NodeSC] (A7) {\bf $7$};
		  	\path (A7) +(0:1.5*\Hdis) node[whiteDot,scale=\NodeSC] (A3) {\bf $3$};
		  	\path (A9) +(0:1.5*\Hdis) node[whiteDot,scale=\NodeSC] (A1) {\bf $1$};
			\path (A6) +(0:0.5*\Hdis) node[] (A44) {};
		  	\path (A44) +(90:0.5*\Vdis) node[whiteDot,scale=\NodeSC] (A4) {\bf $4$};
		  	\path (A4) +(0:\Hdis) node[whiteDot,scale=\NodeSC] (A5) {\bf $5$};
		  	\path (A5) +(0:0.5*\Hdis) node[whiteDot,scale=\NodeSC] (A2) {\bf $2$};
	      \begin{pgfonlayer}{background}
	        \draw[line width=\lineWidth,>-<] (A4) to node {} (A6);
	        \draw[line width=\lineWidth,>-<] (A4) to node {} (A7);
	        \draw[line width=\lineWidth,>->] (A4) to node {} (A8);
	        \draw[line width=\lineWidth,<-<] (A4) to node {} (A9);
	        \draw[line width=\lineWidth,>-<] (A5) to node {} (A7);
	        \draw[line width=\lineWidth,>-<] (A5) to node {} (A9);
	        \draw[line width=\lineWidth,>-<] (A5) to node {} (A2);
	        \draw[line width=\lineWidth,>-<] (A2) to node {} (A1);
	        \draw[line width=\lineWidth,>->] (A2) to node {} (A3);
	      \end{pgfonlayer}
		  \node[anchor=north] at (current bounding box.south){$G_{12}$};
	    \end{tikzpicture}
		\hspace{0.5cm}
	    \begin{tikzpicture}[name=G_13, line width=0.25mm, baseline, scale=0.55, descr/.style={sloped,below}]
	      \node[whiteDot,scale=\NodeSC] (A6) at (0,0) {\bf $6$};
		  	\path (A6) +(90:\Vdis) node[whiteDot,scale=\NodeSC] (A8) {\bf $8$};
		  	\path (A6) +(0:\Hdis) node[whiteDot,scale=\NodeSC] (A9) {\bf $9$};
		  	\path (A8) +(0:\Hdis) node[whiteDot,scale=\NodeSC] (A7) {\bf $7$};
		  	\path (A7) +(0:1.5*\Hdis) node[whiteDot,scale=\NodeSC] (A3) {\bf $3$};
		  	\path (A9) +(0:1.5*\Hdis) node[whiteDot,scale=\NodeSC] (A1) {\bf $1$};
			\path (A6) +(0:0.5*\Hdis) node[] (A44) {};
		  	\path (A44) +(90:0.5*\Vdis) node[whiteDot,scale=\NodeSC] (A4) {\bf $4$};
		  	\path (A4) +(0:\Hdis) node[whiteDot,scale=\NodeSC] (A5) {\bf $5$};
		  	\path (A5) +(0:0.5*\Hdis) node[whiteDot,scale=\NodeSC] (A2) {\bf $2$};
	      \begin{pgfonlayer}{background}
	        \draw[line width=\lineWidth,>-<] (A4) to node {} (A6);
	        \draw[line width=\lineWidth,>-<] (A4) to node {} (A7);
	        \draw[line width=\lineWidth,>->] (A4) to node {} (A8);
	        \draw[line width=\lineWidth,<-<] (A4) to node {} (A9);
	        \draw[line width=\lineWidth,>-<] (A5) to node {} (A7);
	        \draw[line width=\lineWidth,>-<] (A5) to node {} (A9);
	        \draw[line width=\lineWidth,>-<] (A5) to node {} (A2);
	        \draw[line width=\lineWidth,>->] (A2) to node {} (A3);
	        \draw[line width=\lineWidth,>-<] (A1) to node {} (A3);
	      \end{pgfonlayer}
		  \node[anchor=north] at (current bounding box.south){$G_{13}$};
	    \end{tikzpicture}
		\vspace{0.3cm}
		\caption{Spanning $TU$-subgraphs of $\Gamma_2$ on $9$ edges.}\label{tu_2}
	\end{figure}
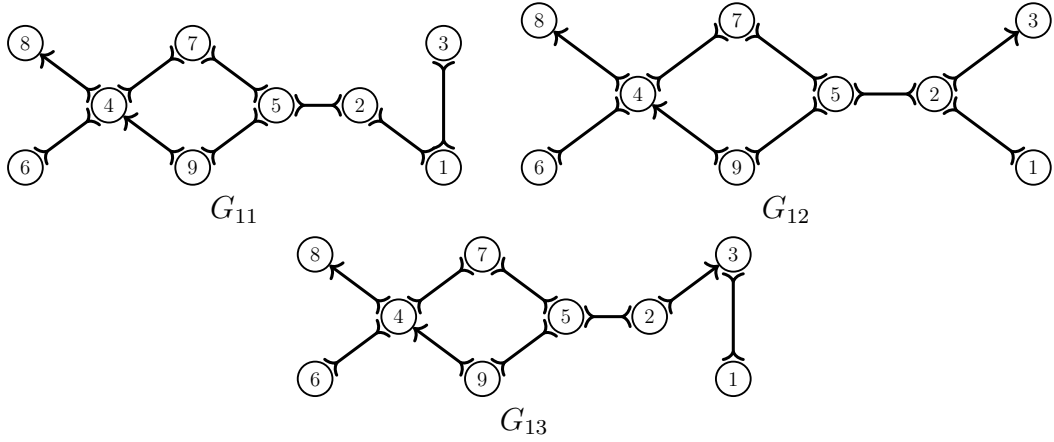

 In this case $\vol^2 N=4+4+4=12.$
	Then,  $$(N^\dagger)_{53}=\frac{1}{12}(4\cdot(-\sgn_{G_{12}}(P_{e_5-3}))+4(-\sgn_{G_{13}}(P_{e_5-3})))=-\frac{8}{12}=-\frac{2}{3}.$$
\end{example}

\begin{example} In this example we consider unbalanced bicyclic graphs $\Gamma_3$ with two negative cycles.
	\begin{figure}[h]
	\centering
		\def\dis{3cm}
	    \def\NodeSC{0.75}
	    \def\Hdis{4.5}
	    \def\Vdis{3.5}
	    \begin{tikzpicture}[line width=0.25mm, baseline, scale=1.0, descr/.style={sloped,below}]
	      \node[whiteDot,scale=\NodeSC] (A6) at (0,0) {\bf $6$};
		  	\path (A6) +(90:\Vdis) node[whiteDot,scale=\NodeSC] (A8) {\bf $8$};
		  	\path (A6) +(0:\Hdis) node[whiteDot,scale=\NodeSC] (A9) {\bf $9$};
		  	\path (A8) +(0:\Hdis) node[whiteDot,scale=\NodeSC] (A7) {\bf $7$};
		  	\path (A7) +(0:1.5*\Hdis) node[whiteDot,scale=\NodeSC] (A3) {\bf $3$};
		  	\path (A9) +(0:1.5*\Hdis) node[whiteDot,scale=\NodeSC] (A1) {\bf $1$};
			\path (A6) +(0:0.5*\Hdis) node[] (A44) {};
		  	\path (A44) +(90:0.5*\Vdis) node[whiteDot,scale=\NodeSC] (A4) {\bf $4$};
		  	\path (A4) +(0:\Hdis) node[whiteDot,scale=\NodeSC] (A5) {\bf $5$};
		  	\path (A5) +(0:0.5*\Hdis) node[whiteDot,scale=\NodeSC] (A2) {\bf $2$};
	      \begin{pgfonlayer}{background}
	        \draw[line width=\lineWidth,>-<] (A4) to node[midway, below] {\emph{$-e_2$}} (A6);
	        \draw[line width=\lineWidth,>-<] (A4) to node[midway, below] {\emph{$-e_9$}} (A7);
	        \draw[line width=\lineWidth,>->] (A4) to node[midway, below] {\emph{$+e_1$}} (A8);
	        \draw[line width=\lineWidth,>->] (A4) to node[midway, below] {\emph{$+e_7$}} (A9);
	        \draw[line width=\lineWidth,>-<] (A5) to node[midway, below] {\emph{$-e_6$}} (A7);
	        \draw[line width=\lineWidth,>-<] (A5) to node[midway, below] {\emph{$-e_8$}} (A9);
	        \draw[line width=\lineWidth,>-<] (A5) to node[midway, above] {\emph{$-e_4$}} (A2);
	        \draw[line width=\lineWidth,>-<] (A2) to node[midway, below] {\emph{$-e_3$}} (A1);
	        \draw[line width=\lineWidth,>->] (A2) to node[midway, below] {\emph{$+e_5$}} (A3);
	        \draw[line width=\lineWidth,>->] (A1) to node[right] {$+e_{10}$} (A3);
	      \end{pgfonlayer}
	    \end{tikzpicture}
		\vspace{1cm}
		\caption{A bicyclic graph $\Gamma_2$ that contains two odd cycles.}\label{gamma3}
	\end{figure}
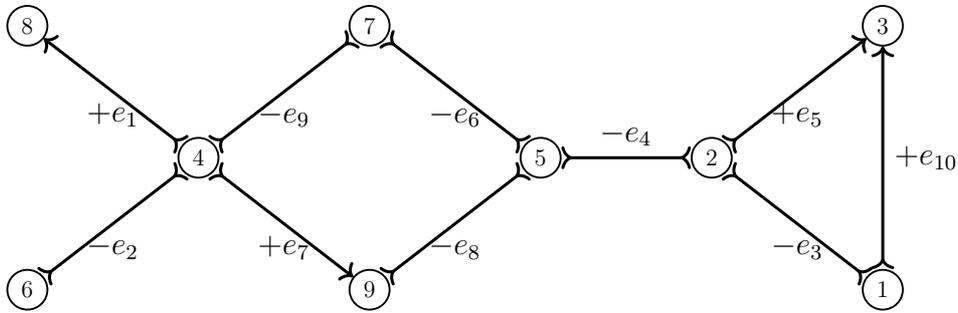

	Spanning subgraphs on $n=9$ edges of $\Gamma_3$ are depicted in Figure \ref{fig:44}.
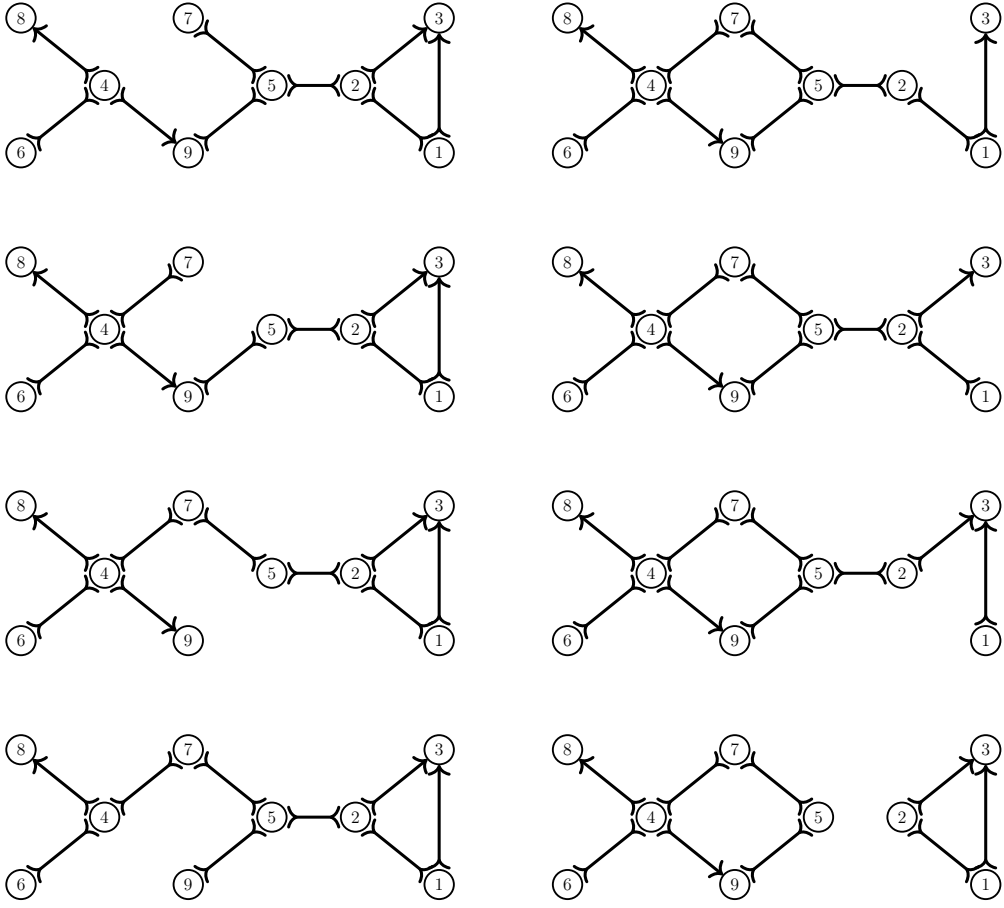
\begin{figure}[h]
	\centering
		\def\dis{3cm}
	    \def\NodeSC{0.55}
	    \def\Hdis{4.0}
	    \def\Vdis{3.25}
	    \begin{tikzpicture}[line width=0.25mm, baseline, scale=0.55, descr/.style={sloped,below}]
	      \node[whiteDot,scale=\NodeSC] (A6) at (0,0) {\bf $6$};
		  	\path (A6) +(90:\Vdis) node[whiteDot,scale=\NodeSC] (A8) {\bf $8$};
		  	\path (A6) +(0:\Hdis) node[whiteDot,scale=\NodeSC] (A9) {\bf $9$};
		  	\path (A8) +(0:\Hdis) node[whiteDot,scale=\NodeSC] (A7) {\bf $7$};
		  	\path (A7) +(0:1.5*\Hdis) node[whiteDot,scale=\NodeSC] (A3) {\bf $3$};
		  	\path (A9) +(0:1.5*\Hdis) node[whiteDot,scale=\NodeSC] (A1) {\bf $1$};
			\path (A6) +(0:0.5*\Hdis) node[] (A44) {};
		  	\path (A44) +(90:0.5*\Vdis) node[whiteDot,scale=\NodeSC] (A4) {\bf $4$};
		  	\path (A4) +(0:\Hdis) node[whiteDot,scale=\NodeSC] (A5) {\bf $5$};
		  	\path (A5) +(0:0.5*\Hdis) node[whiteDot,scale=\NodeSC] (A2) {\bf $2$};
	      \begin{pgfonlayer}{background}
	        \draw[line width=\lineWidth,>-<] (A4) to node {} (A6);
	        \draw[line width=\lineWidth,>->] (A4) to node {} (A8);
	        \draw[line width=\lineWidth,>->] (A4) to node {} (A9);
	        \draw[line width=\lineWidth,>-<] (A5) to node {} (A7);
	        \draw[line width=\lineWidth,>-<] (A5) to node {} (A9);
	        \draw[line width=\lineWidth,>-<] (A5) to node {} (A2);
	        \draw[line width=\lineWidth,>-<] (A2) to node {} (A1);
	        \draw[line width=\lineWidth,>->] (A2) to node {} (A3);
	        \draw[line width=\lineWidth,>->] (A1) to node {} (A3);
	      \end{pgfonlayer}
	    \end{tikzpicture}
		\hspace{1cm}
	    \begin{tikzpicture}[line width=0.25mm, baseline, scale=0.55, descr/.style={sloped,below}]
	      \node[whiteDot,scale=\NodeSC] (A6) at (0,0) {\bf $6$};
		  	\path (A6) +(90:\Vdis) node[whiteDot,scale=\NodeSC] (A8) {\bf $8$};
		  	\path (A6) +(0:\Hdis) node[whiteDot,scale=\NodeSC] (A9) {\bf $9$};
		  	\path (A8) +(0:\Hdis) node[whiteDot,scale=\NodeSC] (A7) {\bf $7$};
		  	\path (A7) +(0:1.5*\Hdis) node[whiteDot,scale=\NodeSC] (A3) {\bf $3$};
		  	\path (A9) +(0:1.5*\Hdis) node[whiteDot,scale=\NodeSC] (A1) {\bf $1$};
			\path (A6) +(0:0.5*\Hdis) node[] (A44) {};
		  	\path (A44) +(90:0.5*\Vdis) node[whiteDot,scale=\NodeSC] (A4) {\bf $4$};
		  	\path (A4) +(0:\Hdis) node[whiteDot,scale=\NodeSC] (A5) {\bf $5$};
		  	\path (A5) +(0:0.5*\Hdis) node[whiteDot,scale=\NodeSC] (A2) {\bf $2$};
	      \begin{pgfonlayer}{background}
	        \draw[line width=\lineWidth,>-<] (A4) to node {} (A6);
	        \draw[line width=\lineWidth,>-<] (A4) to node {} (A7);
	        \draw[line width=\lineWidth,>->] (A4) to node {} (A8);
	        \draw[line width=\lineWidth,>->] (A4) to node {} (A9);
	        \draw[line width=\lineWidth,>-<] (A5) to node {} (A7);
	        \draw[line width=\lineWidth,>-<] (A5) to node {} (A9);
	        \draw[line width=\lineWidth,>-<] (A5) to node {} (A2);
	        \draw[line width=\lineWidth,>-<] (A2) to node {} (A1);
	        \draw[line width=\lineWidth,>->] (A1) to node {} (A3);
	      \end{pgfonlayer}
	    \end{tikzpicture}

		\vspace{1cm}
		
	    \begin{tikzpicture}[line width=0.25mm, baseline, scale=0.55, descr/.style={sloped,below}]
	      \node[whiteDot,scale=\NodeSC] (A6) at (0,0) {\bf $6$};
		  	\path (A6) +(90:\Vdis) node[whiteDot,scale=\NodeSC] (A8) {\bf $8$};
		  	\path (A6) +(0:\Hdis) node[whiteDot,scale=\NodeSC] (A9) {\bf $9$};
		  	\path (A8) +(0:\Hdis) node[whiteDot,scale=\NodeSC] (A7) {\bf $7$};
		  	\path (A7) +(0:1.5*\Hdis) node[whiteDot,scale=\NodeSC] (A3) {\bf $3$};
		  	\path (A9) +(0:1.5*\Hdis) node[whiteDot,scale=\NodeSC] (A1) {\bf $1$};
			\path (A6) +(0:0.5*\Hdis) node[] (A44) {};
		  	\path (A44) +(90:0.5*\Vdis) node[whiteDot,scale=\NodeSC] (A4) {\bf $4$};
		  	\path (A4) +(0:\Hdis) node[whiteDot,scale=\NodeSC] (A5) {\bf $5$};
		  	\path (A5) +(0:0.5*\Hdis) node[whiteDot,scale=\NodeSC] (A2) {\bf $2$};
	      \begin{pgfonlayer}{background}
	        \draw[line width=\lineWidth,>-<] (A4) to node {} (A6);
	        \draw[line width=\lineWidth,>-<] (A4) to node {} (A7);
	        \draw[line width=\lineWidth,>->] (A4) to node {} (A8);
	        \draw[line width=\lineWidth,>->] (A4) to node {} (A9);
	        \draw[line width=\lineWidth,>-<] (A5) to node {} (A9);
	        \draw[line width=\lineWidth,>-<] (A5) to node {} (A2);
	        \draw[line width=\lineWidth,>-<] (A2) to node {} (A1);
	        \draw[line width=\lineWidth,>->] (A2) to node {} (A3);
	        \draw[line width=\lineWidth,>->] (A1) to node {} (A3);
	      \end{pgfonlayer}
	    \end{tikzpicture}
		\hspace{1cm}
	    \begin{tikzpicture}[line width=0.25mm, baseline, scale=0.55, descr/.style={sloped,below}]
	      \node[whiteDot,scale=\NodeSC] (A6) at (0,0) {\bf $6$};
		  	\path (A6) +(90:\Vdis) node[whiteDot,scale=\NodeSC] (A8) {\bf $8$};
		  	\path (A6) +(0:\Hdis) node[whiteDot,scale=\NodeSC] (A9) {\bf $9$};
		  	\path (A8) +(0:\Hdis) node[whiteDot,scale=\NodeSC] (A7) {\bf $7$};
		  	\path (A7) +(0:1.5*\Hdis) node[whiteDot,scale=\NodeSC] (A3) {\bf $3$};
		  	\path (A9) +(0:1.5*\Hdis) node[whiteDot,scale=\NodeSC] (A1) {\bf $1$};
			\path (A6) +(0:0.5*\Hdis) node[] (A44) {};
		  	\path (A44) +(90:0.5*\Vdis) node[whiteDot,scale=\NodeSC] (A4) {\bf $4$};
		  	\path (A4) +(0:\Hdis) node[whiteDot,scale=\NodeSC] (A5) {\bf $5$};
		  	\path (A5) +(0:0.5*\Hdis) node[whiteDot,scale=\NodeSC] (A2) {\bf $2$};
	      \begin{pgfonlayer}{background}
	        \draw[line width=\lineWidth,>-<] (A4) to node {} (A6);
	        \draw[line width=\lineWidth,>-<] (A4) to node {} (A7);
	        \draw[line width=\lineWidth,>->] (A4) to node {} (A8);
	        \draw[line width=\lineWidth,>->] (A4) to node {} (A9);
	        \draw[line width=\lineWidth,>-<] (A5) to node {} (A7);
	        \draw[line width=\lineWidth,>-<] (A5) to node {} (A9);
	        \draw[line width=\lineWidth,>-<] (A5) to node {} (A2);
	        \draw[line width=\lineWidth,>-<] (A2) to node {} (A1);
	        \draw[line width=\lineWidth,>->] (A2) to node {} (A3);
	      \end{pgfonlayer}
	    \end{tikzpicture}

		\vspace{1cm}
		
	    \begin{tikzpicture}[line width=0.25mm, baseline, scale=0.55, descr/.style={sloped,below}]
	      \node[whiteDot,scale=\NodeSC] (A6) at (0,0) {\bf $6$};
		  	\path (A6) +(90:\Vdis) node[whiteDot,scale=\NodeSC] (A8) {\bf $8$};
		  	\path (A6) +(0:\Hdis) node[whiteDot,scale=\NodeSC] (A9) {\bf $9$};
		  	\path (A8) +(0:\Hdis) node[whiteDot,scale=\NodeSC] (A7) {\bf $7$};
		  	\path (A7) +(0:1.5*\Hdis) node[whiteDot,scale=\NodeSC] (A3) {\bf $3$};
		  	\path (A9) +(0:1.5*\Hdis) node[whiteDot,scale=\NodeSC] (A1) {\bf $1$};
			\path (A6) +(0:0.5*\Hdis) node[] (A44) {};
		  	\path (A44) +(90:0.5*\Vdis) node[whiteDot,scale=\NodeSC] (A4) {\bf $4$};
		  	\path (A4) +(0:\Hdis) node[whiteDot,scale=\NodeSC] (A5) {\bf $5$};
		  	\path (A5) +(0:0.5*\Hdis) node[whiteDot,scale=\NodeSC] (A2) {\bf $2$};
	      \begin{pgfonlayer}{background}
	        \draw[line width=\lineWidth,>-<] (A4) to node {} (A6);
	        \draw[line width=\lineWidth,>-<] (A4) to node {} (A7);
	        \draw[line width=\lineWidth,>->] (A4) to node {} (A8);
	        \draw[line width=\lineWidth,>->] (A4) to node {} (A9);
	        \draw[line width=\lineWidth,>-<] (A5) to node {} (A7);
	        \draw[line width=\lineWidth,>-<] (A5) to node {} (A2);
	        \draw[line width=\lineWidth,>-<] (A2) to node {} (A1);
	        \draw[line width=\lineWidth,>->] (A2) to node {} (A3);
	        \draw[line width=\lineWidth,>->] (A1) to node {} (A3);
	      \end{pgfonlayer}
	    \end{tikzpicture}
		\hspace{1cm}
	    \begin{tikzpicture}[line width=0.25mm, baseline, scale=0.55, descr/.style={sloped,below}]
	      \node[whiteDot,scale=\NodeSC] (A6) at (0,0) {\bf $6$};
		  	\path (A6) +(90:\Vdis) node[whiteDot,scale=\NodeSC] (A8) {\bf $8$};
		  	\path (A6) +(0:\Hdis) node[whiteDot,scale=\NodeSC] (A9) {\bf $9$};
		  	\path (A8) +(0:\Hdis) node[whiteDot,scale=\NodeSC] (A7) {\bf $7$};
		  	\path (A7) +(0:1.5*\Hdis) node[whiteDot,scale=\NodeSC] (A3) {\bf $3$};
		  	\path (A9) +(0:1.5*\Hdis) node[whiteDot,scale=\NodeSC] (A1) {\bf $1$};
			\path (A6) +(0:0.5*\Hdis) node[] (A44) {};
		  	\path (A44) +(90:0.5*\Vdis) node[whiteDot,scale=\NodeSC] (A4) {\bf $4$};
		  	\path (A4) +(0:\Hdis) node[whiteDot,scale=\NodeSC] (A5) {\bf $5$};
		  	\path (A5) +(0:0.5*\Hdis) node[whiteDot,scale=\NodeSC] (A2) {\bf $2$};
	      \begin{pgfonlayer}{background}
	        \draw[line width=\lineWidth,>-<] (A4) to node {} (A6);
	        \draw[line width=\lineWidth,>-<] (A4) to node {} (A7);
	        \draw[line width=\lineWidth,>->] (A4) to node {} (A8);
	        \draw[line width=\lineWidth,>->] (A4) to node {} (A9);
	        \draw[line width=\lineWidth,>-<] (A5) to node {} (A7);
	        \draw[line width=\lineWidth,>-<] (A5) to node {} (A9);
	        \draw[line width=\lineWidth,>-<] (A5) to node {} (A2);
	        \draw[line width=\lineWidth,>->] (A2) to node {} (A3);
	        \draw[line width=\lineWidth,>->] (A1) to node {} (A3);
	      \end{pgfonlayer}
	    \end{tikzpicture}

		\vspace{1cm}
		
	    \begin{tikzpicture}[line width=0.25mm, baseline, scale=0.55, descr/.style={sloped,below}]
	      \node[whiteDot,scale=\NodeSC] (A6) at (0,0) {\bf $6$};
		  	\path (A6) +(90:\Vdis) node[whiteDot,scale=\NodeSC] (A8) {\bf $8$};
		  	\path (A6) +(0:\Hdis) node[whiteDot,scale=\NodeSC] (A9) {\bf $9$};
		  	\path (A8) +(0:\Hdis) node[whiteDot,scale=\NodeSC] (A7) {\bf $7$};
		  	\path (A7) +(0:1.5*\Hdis) node[whiteDot,scale=\NodeSC] (A3) {\bf $3$};
		  	\path (A9) +(0:1.5*\Hdis) node[whiteDot,scale=\NodeSC] (A1) {\bf $1$};
			\path (A6) +(0:0.5*\Hdis) node[] (A44) {};
		  	\path (A44) +(90:0.5*\Vdis) node[whiteDot,scale=\NodeSC] (A4) {\bf $4$};
		  	\path (A4) +(0:\Hdis) node[whiteDot,scale=\NodeSC] (A5) {\bf $5$};
		  	\path (A5) +(0:0.5*\Hdis) node[whiteDot,scale=\NodeSC] (A2) {\bf $2$};
	      \begin{pgfonlayer}{background}
	        \draw[line width=\lineWidth,>-<] (A4) to node {} (A6);
	        \draw[line width=\lineWidth,>-<] (A4) to node {} (A7);
	        \draw[line width=\lineWidth,>->] (A4) to node {} (A8);
	        \draw[line width=\lineWidth,>-<] (A5) to node {} (A7);
	        \draw[line width=\lineWidth,>-<] (A5) to node {} (A9);
	        \draw[line width=\lineWidth,>-<] (A5) to node {} (A2);
	        \draw[line width=\lineWidth,>-<] (A2) to node {} (A1);
	        \draw[line width=\lineWidth,>->] (A2) to node {} (A3);
	        \draw[line width=\lineWidth,>->] (A1) to node {} (A3);
	      \end{pgfonlayer}
	    \end{tikzpicture}
		\hspace{1cm}
	    \begin{tikzpicture}[line width=0.25mm, baseline, scale=0.55, descr/.style={sloped,below}]
	      \node[whiteDot,scale=\NodeSC] (A6) at (0,0) {\bf $6$};
		  	\path (A6) +(90:\Vdis) node[whiteDot,scale=\NodeSC] (A8) {\bf $8$};
		  	\path (A6) +(0:\Hdis) node[whiteDot,scale=\NodeSC] (A9) {\bf $9$};
		  	\path (A8) +(0:\Hdis) node[whiteDot,scale=\NodeSC] (A7) {\bf $7$};
		  	\path (A7) +(0:1.5*\Hdis) node[whiteDot,scale=\NodeSC] (A3) {\bf $3$};
		  	\path (A9) +(0:1.5*\Hdis) node[whiteDot,scale=\NodeSC] (A1) {\bf $1$};
			\path (A6) +(0:0.5*\Hdis) node[] (A44) {};
		  	\path (A44) +(90:0.5*\Vdis) node[whiteDot,scale=\NodeSC] (A4) {\bf $4$};
		  	\path (A4) +(0:\Hdis) node[whiteDot,scale=\NodeSC] (A5) {\bf $5$};
		  	\path (A5) +(0:0.5*\Hdis) node[whiteDot,scale=\NodeSC] (A2) {\bf $2$};
	      \begin{pgfonlayer}{background}
	        \draw[line width=\lineWidth,>-<] (A4) to node {} (A6);
	        \draw[line width=\lineWidth,>-<] (A4) to node {} (A7);
	        \draw[line width=\lineWidth,>->] (A4) to node {} (A8);
	        \draw[line width=\lineWidth,>->] (A4) to node {} (A9);
	        \draw[line width=\lineWidth,>-<] (A5) to node {} (A7);
	        \draw[line width=\lineWidth,>-<] (A5) to node {} (A9);
	        \draw[line width=\lineWidth,>-<] (A2) to node {} (A1);
	        \draw[line width=\lineWidth,>->] (A2) to node {} (A3);
	        \draw[line width=\lineWidth,>->] (A1) to node {} (A3);
	      \end{pgfonlayer}
	    \end{tikzpicture}
		\caption{Spanning $TU$ subgraphs of $H$}
	\end{figure}\label{fig:44}

Therefore,
$$\vol^2 N=7\cdot 4+4^2=44$$ and
$$(N^\dagger)_{53}=\frac{1}{44}\big(-4\cdot\frac{1}{2}-4\cdot\frac{1}{2}-4\cdot 1-4\cdot\frac{1}{2}-4\cdot 1-4\cdot\frac{1}{2}-4^2\cdot\frac{1}{2}\big)=-\frac{24}{44}=-\frac{6}{11}.$$

\end{example}

\begin{remark}
If $\Gamma$ is a connected signed graph with all positive edges, then general formula is matching the known formula from \cite{bap}. If $\Gamma$ is a connected signed graph with all negative edges, then general formula matches the one given in \cite{AOM}.
\end{remark}

\section{Moore-Penrose inverse of the Laplacian matrix of a signed graph}\label{sec:5}

\noindent In this final section we consider the Moore-Penrose inverse of  the Laplacian matrix of a connected signed graph. 
\begin{theorem}
Let $\Gamma=(G,\sigma)$ be a connected signed graph on $n\geq 2$ vertices $1,2,\ldots,n$ and $m$ edges $e_1,e_2,\ldots,e_m$ with the Laplacian matrix $L$. 
\begin{enumerate}
\item[(a)] If $\Gamma$ is balanced, then 
\begin{equation}
(L^\dagger)_{ij}= \frac{1}{n^2\tau(\Gamma)^2}\sum_{k=1}^m\sum_{\substack{T_r, T_s\in \mathcal{S}(\Gamma)\\e_k\in T_r, T_s}} \sgn(P_{T_r}(e_k-i))\sgn(P_{T_s}(e_k-j))\Psi(e_k,i,j),
\end{equation}
where
\begin{equation}
\Psi(e_k,i,j)=
 \begin{cases}
 |T^{T_r}_h(e_k)| |T^{T_s}_h(e_k)|& \text{ if } i \in T^{T_r}_t(e_k)\text { and } j\in T^{T_s}_t(e_k) \\
 |T^{T_r}_t(e_k)| |T^{T_s}_t(e_k)|& \text{ if } i \in T^{T_r}_h(e_k)\text { and } j\in T^{T_s}_h(e_k)\\
 -\sgn(e_k)|T^{T_r}_h(e_k)| |T^{T_s}_t(e_k)|& \text{ otherwise}.
 \end{cases} 
\end{equation}

\item[(b)] If $\Gamma$ is unbalanced, then
\begin{equation}
(L^{-1})_{ij}= \frac{1}{(\sum_{H\in \mathcal{TU}_n(\Gamma)} 4^{c(H)})^2}\sum_{k=1}^m\sum_{\substack{H\in \mathcal{TU}_n(\Gamma)\\e_k\in H}} 4^{c(H)} (N_H^+)_{
ki}\sum_{\substack{H\in \mathcal{TU}_n(\Gamma)\\e_k\in H}} 4^{c(H)} (N_H^+)_{kj},
\end{equation}
where $\mathcal{TU}_n(\Gamma)$ is the set of all spanning $TU$-subgraphs $H$ of $\Gamma$ with $n$ edges consisting of $c(H)$ unbalanced unicyclic graphs $U^H_1,U^H_2,\ldots,U^H_{c(H)}$ with cycles $C^H_1,C^H_2,\ldots,C^H_{c(H)}$ respectively  and when $e_i\in U^H_r$ for some $r$, then
\begin{equation}\label{Unbalanced G N inverse2}
(N_H^\dagger)_{ij}= \begin{cases}
\frac{1}{2}\sgn\left(P_{e_i^t-j}\right) & \text{ if } e_i \in C^H_r\\
0 & \text{ if } e_i \not \in C_r \text{ and } j \in U^H_r\setminus e_i [C^H_r]\\
-\sgn(P_{e_i-j}) & \text{ if } e_i \in E^+(U^H_r)\setminus E(C^H_r) \text{ and } j \not \in U^H_r\setminus e_i [C^H_r] \text{ and } j\in (U^H_r)_h(e_i)\\
\sgn(P_{e_i-j}) & \text{ otherwise, }
\end{cases} 
\end{equation}
where $P_{e_i^t-j}$ denotes the path  between the tail of $e_i\in C^H_r$ and $j$ in the tree $U_r^H\setminus e_i$.

\end{enumerate}
\end{theorem}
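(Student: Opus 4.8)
The plan is to reduce the whole statement to the incidence-matrix formulas already obtained, using the identity $L=NN^\intercal$ together with the fact that $L^\dagger=(N^\dagger)^\intercal N^\dagger$. I would first establish the latter directly from the defining relations \eqref{1}: put $X:=(N^\dagger)^\intercal N^\dagger$, and use $NN^\dagger N=N$, the symmetry and idempotence of $NN^\dagger$ and of $N^\dagger N$, and the fact that $N^\dagger N$ is the orthogonal projector onto the row space of $N$ (so that $N^\dagger NN^\intercal=N^\intercal$). These give $LX=XL=NN^\dagger$, which is symmetric, together with $LXL=L$ and $XLX=X$, hence $X=L^\dagger$. Entrywise this reads $(L^\dagger)_{ij}=\sum_{k=1}^m (N^\dagger)_{ki}(N^\dagger)_{kj}$, and this single identity drives both parts. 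In the unbalanced case I would also record that $\rank(NN^\intercal)=\rank(N)=n$ by Lemma \ref{rank}, so $L$ is invertible, $L^{-1}=L^\dagger=(N^\dagger)^\intercal N^\dagger$, and the symbol $N^{-1}$ appearing in Theorem \ref{gen}(b) should be read as the full-row-rank one-sided inverse $N^\dagger$, so that the substitution below is unambiguous.

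For part (a), I would substitute into $(L^\dagger)_{ij}=\sum_k (N^\dagger)_{ki}(N^\dagger)_{kj}$ the expansion $(N^\dagger)_{k\ell}=\tau(\Gamma)^{-1}\sum_{T\in\mathcal{S}(\Gamma),\ e_k\in T}(N_T^\dagger)_{k\ell}$ from Theorem \ref{gen}(a); this turns each summand into $\tau(\Gamma)^{-2}$ times a double sum over ordered pairs $(T_r,T_s)$ of spanning trees that both contain $e_k$, of $(N_{T_r}^\dagger)_{ki}(N_{T_s}^\dagger)_{kj}$. Inserting the tree formula of Theorem \ref{tree}, the two factors $w(e_k)$ multiply to $1$, leaving $n^{-2}\sgn(P_{T_r}(e_k-i))\sgn(P_{T_s}(e_k-j))$ times the product of the two head/tail quantities; expanding that product over the four configurations of $i$ (with respect to $T_r\setminus e_k$) and of $j$ (with respect to $T_s\setminus e_k$), and using $\sigma(e_k)^2=1$ in the two ``pure'' configurations, yields exactly $\Psi(e_k,i,j)$. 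Summing over $k$ and over all such pairs gives the asserted formula. For part (b) I would instead substitute $(N^\dagger)_{k\ell}=\big(\sum_{H\in\mathcal{TU}_n(\Gamma)}4^{c(H)}\big)^{-1}\sum_{H\ni e_k}4^{c(H)}(N_H^{-1})_{k\ell}$ from Theorem \ref{gen}(b), with $(N_H^{-1})_{k\ell}$ given componentwise by Theorem \ref{uni} as in \eqref{Unbalanced G N inverse}; then $(N^\dagger)_{ki}(N^\dagger)_{kj}$ factors as $\big(\sum_H 4^{c(H)}\big)^{-2}$ times the product of the two single $H$-sums $\sum_{H\ni e_k}4^{c(H)}(N_H^{-1})_{ki}$ and $\sum_{H\ni e_k}4^{c(H)}(N_H^{-1})_{kj}$, and summing over $k$ is precisely the claimed expression.

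I do not expect a conceptual obstacle: the content is the transfer of known combinatorial formulas through $L=NN^\intercal$. The step needing the most care is the case analysis producing $\Psi$, because the head and tail components of a fixed edge $e_k$ depend on the spanning tree from which $e_k$ is removed; one must therefore read the position of $i$ off $T_r\setminus e_k$ and that of $j$ off $T_s\setminus e_k$ independently, and keep the factor $-\sigma(e_k)$ attached to the correct head-side term in each of the two mixed configurations (both of which produce a $-\sigma(e_k)$ times a head-size times a tail-size, which is the reading intended by the ``otherwise'' line of $\Psi$). I would also double-check that the empty-product conventions for $\sgn(P_{e_k-\ell})$ coincide with those fixed in the proof of Theorem \ref{tree}, and spell out explicitly the identification $N^{-1}=N^\dagger$ for the full-row-rank matrix of an unbalanced graph, so that Theorem \ref{gen}(b) can be fed into $L^{-1}=(N^\dagger)^\intercal N^\dagger$ without ambiguity. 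Everything else is routine bookkeeping.
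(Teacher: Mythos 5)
Your proposal is correct and follows essentially the same route as the paper: both derive the result from $L=NN^\intercal\implies L^\dagger=(N^\dagger)^\intercal N^\dagger$, expand $(L^\dagger)_{ij}=\sum_k (N^\dagger)_{ki}(N^\dagger)_{kj}$, and substitute the spanning-tree (resp.\ spanning $TU$-subgraph) expansions of Theorem \ref{gen} together with the entrywise formulas of Theorems \ref{tree} and \ref{uni}. The only difference is that you verify the identity $L^\dagger=(N^\dagger)^\intercal N^\dagger$ from the Penrose axioms and spell out the $w(e_k)^2=1$ cancellation and the mixed-configuration cases of $\Psi$, all of which the paper leaves implicit.
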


\begin{proof}
The expressions follow from
\[L=NN^\intercal \implies L^\dagger=(N^\dagger)^\intercal N^\dagger.\]
(a) For $N^\dagger=[n_{ij}]$,
$$(L^\dagger)_{ij}=\sum_{k=1}^m n_{ki}n_{kj}=\frac{1}{n\tau(\Gamma)}\sum_{T\in S(\Gamma)\atop e_k\in T} ({N_T^\dagger})_{ki}\frac{1}{n\tau(\Gamma)}\sum_{T\in S(\Gamma)\atop e_k\in T} ({N_T^\dagger})_{kj}=\frac{1}{n^2\tau(\Gamma)^2}\sum_{T_t,T_s\in S(\Gamma)\atop e_k\in T_r, T_s} ({N_T^\dagger})_{ki}({N_T^\dagger})_{kj}$$
Taking into account the results on $N_T^\dagger$ given in Theorem \ref{tree} we arrive to 
$$({N_T^\dagger})_{ki}({N_T^\dagger})_{kj}=\sgn(P_{T_r}(e_k-i))\sgn(P_{T_s}(e_k-j))\Psi(e_k,i,j),$$ as desired.

The proof of (b) follows in a similar way.
\end{proof}

\section{Open problems}\label{sec:6}
\noindent It can be verified that for a signed graph $\Gamma$,
$$\ell^\dagger_{ii}+\ell^\dagger_{jj}-2\sgn(P_{i-j})\ell^\dagger_{ij}=d(i,j).$$ This property can lead to the definition of resistance matrix  $R$, $r_{ij}= \ell^\dagger_{ii}+\ell^\dagger_{jj}-2\sgn(P_{i-j})\ell^\dagger_{ij}$ of signed graphs. It might  be worth considering  the properties of this matrix as well as its rise to the Kemany's constant of signed graphs (as suggested in \cite{sh}).

Barik et al. introduced some complex matrices associated with a multidigraph via a certain weighted graph \cite{Barik1}. It will be interesting to find out what their Moore-Penrose inverses tell us about the associated multidigraph.
\begin{question}
Find combinatorial formulas for the following matrices for a connected multidigraph:
(a) complex incidence and  complex Laplacian matrices, and 
(b) complex signless incidence and complex signless Laplacian matrices.
\end{question}

\bigskip

\noindent{\bf\large Acknowledgments}\\
The authors would like to thank Thomas Zaslavsky for inspiring them to look into signed graphs in the study of Moore-Penrose inverses of the associated matrices.

The research of the second author is partially supported by the Science Fund of the Republic of Serbia; grant number 7749676: Spectrally Constrained Signed Graphs with Applications in Coding Theory and Control Theory -- SCSG-ctct.



\begin{thebibliography}{99}
\bibitem{AOM} A. Alazemi, O. Alhalabi, M. An\dj eli\'c, {\it Moore-Penrose inverse of the signless Laplacians of bipartite graphs}, Bull. Iran. Math. Soc. 49, 51 (2023). 

\bibitem{AB} A. Azimi and R.B. Bapat, {\it Moore-Penrose inverse of the incidence matrix of a distance regular graph}, Linear Algebra Appl. 551 (2018) 92--103.

\bibitem{ABE} A. Azimi, R.B. Bapat, and E. Estaji, {\it Moore-Penrose inverse of incidence matrix of graphs with complete and cyclic blocks}, Discrete Mathematics 342 (2019) 10--17.

\bibitem{bap} R.B. Bapat, {\it Moore-Penrose inverse of the incidence matrix of a tree}, Linear  Multilinear Algebra 42 (1997) 159--167.

\bibitem{Barik1} Barik,~S., Reddy,~S.~U., and Sahoo,~G., {\it On singularity and properties of eigenvectors of complex Laplacian matrix of multidigraphs}, AKCE Int. J. Graphs Comb. 20(2):125--133.

\bibitem{BSKS} F. Belardo, S.K. Simi\'c, {\it On the Laplacian coefficients of signed graphs},  Linear Algebra Appl. 475 (2015), 94--113.

\bibitem{BH} Andries E. Brouwer and Williem H. Haemers, {\it Spectra of Graphs}, Springer-Verlag New York (2012). 

\bibitem{CRC}  
Drago\v s Cvetkovi\' c, Peter Rowlinson, and Slobodan K. Simi\' c, {\it Signless Laplacians of finite graphs}, Linear Algebra Appl. 423 (2007) 155--171.

\bibitem{HM} Keivan Hassani Monfared and Sudipta Mallik, {\it An analog of matrix tree theorem for signless Laplacians}, Linear Algebra Appl. 560 (2019) 43--55.

\bibitem{HM1} Ryan Hessert and Sudipta Mallik, {\it Moore-Penrose inverses of the signless Laplacian and edge-Laplacian of graphs}, Discrete Mathematics 344 (2021) 112451.

\bibitem{HM2} Ryan Hessert and Sudipta Mallik, {\it The inverse of the incidence matrix of a unicyclic graph}, Linear  Multilinear Algebra,  71 (2023), 513--527.

\bibitem{I} Yuji Ijiri, {\it On the generalized inverse of an incidence matrix}, Jour. Soc. Indust. Appl. Math., 13(3):827--836 (1965).

\bibitem{Ipsen} Jerad Ipsen and Sudipta Mallik, {\it Incidence and Laplacian matrices of wheel graphs and their inverses}, American Journal of Combinatorics, 2 (2023), 1--19.

\bibitem{sh} Louis W. Shapiro, {\it An electrical lemma}, Mathematics Magazine, 60 (1987), 36--38.

\end{thebibliography}
\end{document}